\journal{~}
\newcommand{\C}{\mathbb{C}}
\newcommand{\R}{\mathbb{R}}
\DeclareMathOperator{\tr}{tr}
\DeclareMathOperator{\grad}{grad}
\DeclareMathOperator{\e}{e}
\newcommand{\half}{\textstyle{\frac{1}{2}}}
\DeclareMathOperator{\ad}{ad}
\DeclareMathOperator{\DD}{D}
\DeclareMathOperator{\dist}{dist}
\DeclareMathOperator{\Gr}{Gr}
\DeclareMathOperator{\St}{St}
\DeclareMathOperator{\Herm}{Herm}
\DeclareMathOperator{\id}{id}
\DeclareMathOperator{\U}{U}
\DeclareMathOperator{\uu}{\mathfrak{u}}
\DeclareMathOperator{\einheit}{\mathcal{I}}
\let\oldmarginpar\marginpar
\renewcommand{\marginpar}[1]{\oldmarginpar{\textcolor{red}{\footnotesize{#1}}}}
\renewcommand{\top}{\mathsf{T}}		
\newcommand{\hop}{\mathsf{H}}		
\newtheorem{theorem}{Theorem}
\newtheorem{lemma}{Lemma}
\newtheorem{remark}{Remark}
\newtheorem{definition}{Definition}
\newcommand{\fd}[2]{\tfrac{\operatorname{d}}{\operatorname{d}{#2}}{#1}}
\begin{document}

\begin{frontmatter}

\title{
Averaging Complex Subspaces via a Karcher Mean Approach
}

\author[label1]{K. H\"uper \footnote{hueper@mathematik.uni-wuerzburg.de}}
\author[label2]{M. Kleinsteuber\footnote{Corresponding author, kleinsteuber@tum.de, web: www.gol.ei.tum.de }}
\author[label2]{H. Shen \footnote{hao.shen@tum.de \\ This work has partially been supported by the Cluster of Excellence \emph{CoTeSys - Cognition for Technical Systems}, funded by the German Research Foundation (DFG).}
}
\address[label1]{Department of Mathematics, Julius-Maximilians-Universit\"at  W\"urzburg, \\ 97074 W\"urzburg, Germany.}
\address[label2]{Department of Electrical Engineering and Information Technology, \\ Technische Universit\"at M\"unchen, 80333 Munich, Germany.}

\begin{abstract}
We propose a conjugate gradient type optimization
technique for the computation of the Karcher mean on
the set of complex linear subspaces of fixed dimension, modeled by the so-called
Grassmannian. 
The identification of the Grassmannian
 with Hermitian projection matrices allows an accessible
introduction of the geometric concepts required for an intrinsic conjugate
gradient method. In particular, proper
definitions of geodesics, parallel transport, and the Riemannian
gradient of the Karcher mean function are presented.
We provide an efficient step-size selection for the special case
of one dimensional complex subspaces and illustrate how the method can be employed for blind identification via numerical experiments.
\end{abstract}

\begin{keyword}
conjugate gradient algorithm \sep Grassmannian 
 \sep complex projective space \sep complex linear subspaces 
  \sep
Karcher mean. 
\end{keyword}

\end{frontmatter}


\section{Introduction}
In a wide range of signal processing applications and methods, subspaces of a fixed dimension play an important role. 
Signal and noise subspaces of covariance matrices are well studied objects
in classical applications, such as subspace tracking \cite{sriv:aap04} or
direction of arrival estimation \cite{adal:book10}.
More recently, a significant amount of work is focussed on applying subspace
based methods to image and video analysis \cite{tura:pami11}, as well as to matrix completion problems \cite{simo:bitnm10}.
One fundamental challenge amongst these works is the study of the statistical properties
of distributions of subspaces. 
Specifically, in the present work, we are interested in computing the mean of a
set of subspaces of equal dimension via averaging.

The averaging process, considered in this paper, employs the intrinsic
geometric  
structure of the underlying set and is also known as the computation of the 
\emph{Karcher mean} (in differential geometry, \cite{karcher:77}), 
Fr{\'e}chet mean or barycentre (statistics), geometric mean (linear algebra and matrix analysis), or
center of mass (physics).
General
concepts of a geometric mean have been
extensively studied from both theoretical and practical points of view. 
To mention just a few, they include probability theory and shape spaces 
\cite{Kendall:1990lr,Le:2001fk},
imaging \cite{Gramkow:2001la}, linear algebra and matrix analysis
\cite{Ando:2004lr}, interpolation
\cite{BussFillmore:2001aa}, and convex and differential geometry
\cite{Kendall:1991aa,CorcueraKendall:1999aa}.

An appropriate mathematical framework is given by the so-called
Grassmannian, which assigns a differentiable manifold structure to the set of subspaces of equal
dimension.
Usually, this is achieved by 
identification with a matrix quotient space.\footnote{The set of
  $m$-dimensional subspaces of $\C^n$ is identified with $\C^{n \times
    m}_*/GL(m)$, cf.\cite{absil:book}, $\C^{n\times m}_*$ is the set of full rank $(n \!\times\!
m)$-matrices, and $GL(m)$ are the complex invertible $(m \!\times\!
m)$-matrices. The equivalence relation is defined by $X\!\sim\!Y \!\Leftrightarrow\! X \!=\! g Y$ for some $g \in GL(m)$.}
In this work, we do not follow such an approach. By following \cite{helm:un07} instead,
we identify the set of subspaces of equal dimension with
a set of matrices. More precisely, we consider the set of Hermitian projectors of fixed rank, which inherits its differentiable structure from the surrounding vector space of Hermitian matrices. In contrast to \cite{helm:un07}, we consider the complex case here.
The identification of the complex Grassmannian with Hermitian projection matrices allows an accessible introduction of the geometric concepts such as geodesics, parallel transport, and the Riemannian gradient of the Karcher mean function.

In general, computing the Karcher mean on a smooth manifold involves a process 
of optimization, which by its own is of both theoretical and practical
interest. Various numerical 
methods have been developed on the Grass\-mannian, such as a 
direct method \cite{drei:tr06}, gradient descent algorithms \cite{absi:book08}, 
Newton's method \cite{helm:un07}, and conjugate gradient methods 
\cite{klei:icassp07,mitt:ivc12}.

In this work, we focus on the development of conjugate gradient methods. 
These methods have been proven to be efficient in many applications due to their trade-off between computational complexity and excellent convergence properties.
In particular, we propose an efficient step-size selection for the interesting case
where the Grassmannian is equal to the complex projective space. Moreover, we
outline how the developed method can be employed for blind identification.

The paper is organized as follows. %
Section~\ref{sec:02} recalls some basic concepts in differential geometry, which make
the present work intuitive and self-contained.
An abstract framework of conjugate gradient methods on smooth manifolds is given in 
Section~\ref{sec:03}. 
In Section~\ref{sec:04}, the geometry of the Grass\-mannian is presented, 
followed by a detailed analysis of the of the Karcher mean function in Section~\ref{sec:05}. 
A geometric CG algorithm is given in 
Section~\ref{sec:06} for the computation of the Karcher mean on the Grassmannian in general, together with a particularly efficient step-size selection for the special case of the complex projective space.
In Section~\ref{sec:07}, we outline how the proposed approach of averaging subspaces 
is evidenced to be useful in blind identification and a conclusion is drawn in Section~\ref{sec:08}. 

\section{Differential geometric concepts}
\label{sec:02}

In this section, we shortly recall and explain the differential geometric concepts that are needed for this work.
We refer to \cite{Carmo:1992er} 
for a detailed insight into differential and Riemannian geometry and for the formal definitions of the mathematical objects, and to \cite{absil:book} for an introduction of the topic with a focus on matrix manifolds. 

Strictly speaking, a \emph{manifold} $M$ is a topological space that can locally be continuously mapped to
some linear space, where this map has a continuous inverse. These maps are called \emph{charts}, and since charts are invertible, we can consider the change of two charts around any point in $M$ as a local map from the linear space into itself. $M$ is a \emph{differentiable} or  \emph{smooth} manifold, if these maps are smooth for all points in $M$.
Many data sets considered in signal processing are subsets of such a
manifold. Important examples are matrix groups, the
set of subspaces of fixed dimension, the set of matrices with
orthonormal columns (so-called Stiefel manifold), the set of positive
definite matrices, etc.

To every point $x$ in the smooth manifold $M$ one can assign a
\emph{tangent space}, consisting of all velocities of smooth 
curves in $M$ that pass $x$. Formally, we define
\begin{equation}
T_xM:=\{ \dot{\alpha}_x(0)~|~\alpha(t) \subset M, \alpha_x(0)=x\}. 
\end{equation}
Intuitively, $T_xM$ contains all possible directions in which one can tangentially pass through $x$. The elements of $T_xM$ are called \emph{tangent vectors} at $x$.
 
A \emph{Riemannian manifold} $M$ is a smooth manifold with a scalar product $g_x( \cdot, \cdot )$ assigned to each tangent space $T_x M$ that varies smoothly with $x$, the so called \emph{Riemannian metric}. 
We drop the subscript $x$ if it is clear from the context which tangent space $g$ refers to.
The corresponding norm will be denoted by $\| \cdot \|_g$.
The Riemannian metric allows to measure the distance on the manifold. As a
natural extension of a straight line in the Euclidean space, a
\emph{geodesic} is defined to
be a smooth curve in $M$
that connects two sufficiently close points with shortest length. 
The length of a smooth curve $\alpha\colon (a,b) \to M$ on a Riemannian manifold is defined as
\begin{equation}
\label{eq:lengthfunctional}
L(\alpha)=\textstyle{\int_a^b
  \sqrt{g_{\alpha(t)}\left(\dot{\alpha}(t),\dot{\alpha}(t) \right)} \ \textrm{d}t}.
\end{equation}
In Euclidean space, two velocities at different locations are both vectors in this space. This allows
to form linear combinations and scalar products of these vectors.
In the manifold setting, however, this is not possible, since these velocities are elements in different (tangent) spaces. We hence need a way to identify 
tangent vectors at $x \in M$ with tangent vectors at $y \in M$ if
$x\ne y$. To
that end, we assume that there is a unique geodesic in $M$ that
connects $x$ and $y$, say $\gamma(t)$, with $\gamma(0)=x$ and
$\gamma(\tau)=y$, being possible if $x,y$ are not too far apart. 
The \emph{parallel transport} along $\gamma(t)$ admits one way of identifying 
$T_x M$ with $T_yM$.
A  rigorous definition is beyond the scope of this work, but loosely speaking,
the transportation is done in such a way that during the
transportation process, there is no intrinsic rotation of the transported vector. 
In particular, this
leaves the scalar product between the transported
vector and the velocity of the curve invariant.

Certainly, such an identification of tangent vectors depends on the
geode\-sic. Consider for example a sphere with two different geodesics
connecting the south with the north pole (i.e. two meridians) that leave the
south pole by an angle of  $\pi/2$. 
Parallel transporting the same vector along both meridians from the
south pole to the north will result in two antiparallel vectors at the north pole. 
Note that the identification of different tangent spaces via parallel transport along a geodesic is just \emph{one} particular instance of a more general concept termed \emph{vector transport} in \cite{absil:book}.

In order to minimize a real valued function on $M$,
we have to extend the notion of a gradient to the Riemannian manifold setting. To that end, recall that if $f \colon \R^n \to \R$ is smooth in $x$, there is a unique vector
$G(x)$ such that
\begin{equation}
\fd{f(x+t H)|_{t=0}}{t}=G(x)^\top H=:\langle G(x),H\rangle_{\mathrm{Euclid}} \quad \text{for all } H \in \R^n,
\end{equation}
where $(\cdot)^\top$ denotes transpose.
Typically, we write $\nabla f(x):=G(x)$ and call it the \emph{gradient} of $f$ at $x$. This coordinate free definition of a gradient can be straightforwardly adapted to the manifold case. 
Let
\begin{equation}\label{eq:fonM}
f \colon M \to \R
\end{equation}
be smooth in $x \in M$.
There is a unique tangent vector $G(x) \in T_xM$ such that 
\begin{equation}\label{eq:directional_der}
\operatorname{D}f(x) H := \fd{(f \circ \gamma)
  (t)|_{t=0}}{t}=g_x\left(G(x),\dot{\gamma}(0)\right) 
\end{equation}
for all geodesics $\gamma$ with $\gamma(0)=x$ and $\dot\gamma(0)=H$.
We denote the \emph{Riemannian gradient} as $\grad f(x):= G(x)$. 
Note, that the Riemannian gradient is a tangent vector in the
respective tangent space that depends on the chosen Riemannian
metric. It is unique due to the Riesz representation theorem.
The following special case is of particular interest. Let $M$ be a submanifold of some Euclidean space $E$ with Riemannian metric induced by the surrounding space, i.e. the Riemannian metric is obtained by restricting the scalar product from $E$ to the tangent spaces. In this setting, the \emph{normal subspace} is the orthogonal complement of the tangent space. Assume furthermore, that
the function $f$ that is to be minimized in \eqref{eq:fonM} is in fact the restriction of a function $\widehat{f}$ that is globally defined on the entire surrounding Euclidean space.
If $\Pi_x$ denotes the orthogonal projection from $E$ onto the tangent space $T_xM$, then the Riemannian gradient is just the projection of the gradient of $\widehat{f}$ in $E$. In formulas, this reads as
\begin{equation}
\label{eq:RGrad}
\grad f(x) = \Pi_x \nabla \widehat{f}(x).
\end{equation}

\section{A conjugate gradient method on manifolds}
\label{sec:03}
In this section, we recall one possibility of how to transfer
the concept of conjugate gradient (CG) methods to the manifold
setting. We refer to \cite{absil:book} for a more general approach
that uses retractions on manifolds. Ultimately, the latter approach might lead to a whole
set of general methods to minimize \eqref{eq:fonM}.
%
%
%
The CG method is initialized by some $x_0 \in M$ and the descent direction
$H_0:= -\grad f(x_0)$ is given by the Riemannian gradient.
Subsequently, 
sweeps are iterated that consist of two steps, a \emph{line search}
in a given direction (i.e. along a geodesic in that direction)
followed by an update of the \emph{search direction}. We illustrate the CG method on manifolds in Figure 
\ref{fig3}.
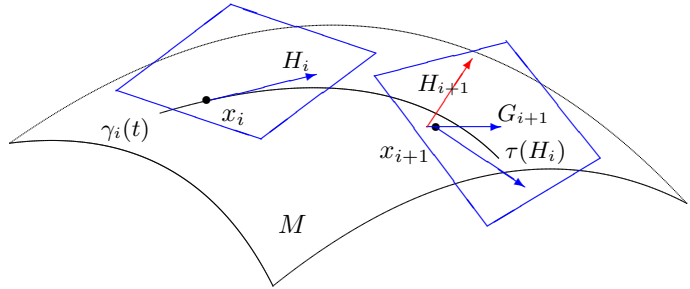
\begin{figure}[t]
 \begin{center}
        \setlength{\unitlength}{1mm}
        \begin{picture}(45, 33)
   \qbezier(-25,17)(-1,20)(10,-2)
   \qbezier(10,-2)(40,22)(65,9)
   \qbezier(-25,17)(25,52)(65,9)
       \qbezier(-5,21)(25,30)(40,15)
     \textcolor{blue}{         \put(1.8,22.7){\vector(4,1){14}}}
   \textcolor{white}{\put(0,22.7){\vector(1,2){4}}}
          \put(0,22.7){\circle*{1}}

     \put(-14,18){\small{$\gamma_i(t)$}}
     \put(10,27){\small{$H_i$}}
          \put(2,20){$x_{i}$}
     %
        \put(9.5,5){$M$}
  \textcolor{blue}{        \put(22.5,29){\line(-3,1){19.3}}
        \put(22.5,29){\line(-4,-3){15.3}}
          \put(-12,24){\line(4,3){15.3}}
          \put(-12,24){\line(3,-1){19.3}}
          }
{
  \put(30.5,19.2){\circle*{1}}
  \put(23,15){$x_{i+1}$} }
{
\textcolor{blue}{        \put(50,15){\line(-4,5){12.5}}
        \put(50,15){\line(-5,-3){15}}
          \put(20,26){\line(4,1){17.5}}
          \put(20,26){\line(3,-4){15}}
          }
\textcolor{blue}{   \put(26.8,19.2){\vector(3,-2){12}}}
 \textcolor{blue}{\put(24.5,19.2){\vector(2,0){10}}}
 \put(35,15){\small{$\tau(H_i)$}}
 \put(34,20){\small{$ G_{i+1}$}} }
{
 \textcolor{red}{\put(22.3,19.2){\vector(2,3){6}}}
 \put(21,24){\small{$H_{i+1}$}}
 }
        \end{picture}         
\end{center}
\caption{Illustration of the geometric conjugate gradient method.}\label{fig3}
\end{figure}
Several different possibilities for these steps lead to different CG methods. Assume now that $x_i$, $H_i$, and $G_i:=\grad f(x_i)$
are given.

\subsection{Line search}
Given a geodesic $\gamma_i$ with $\gamma_i(0)=x_i$ and $\dot{\gamma}_i(0)=H_i$, the line search aims to find $a_i \in \R$ that minimizes $f \circ \gamma\colon t \to \R$.
We propose two approximations. The first is based on the assumption that $f \circ \gamma$
has its minimum near $0$, which under certain mild conditions follows from the fact that $x_i$ is already near the optimum. The step-size is chosen via a one dimensional Newton step, cf. \cite{klei:icassp07},
i.e.
\begin{equation}
\label{def:approx_stepsize_newton}
\textstyle{a_i^{\textrm{Newton}}:=-\tfrac{\ \tfrac{{\rm d}}{{\rm d}t} \left( f \circ \gamma \right) (t)|_{t=0}}{\big|{ \tfrac{{\rm d}^2}{{\rm d}t^2}}
 \left( f \circ \gamma\right) (t)|_{t=0}\big| }.}
\end{equation}
The absolute value in the denominator is chosen for the following reason. While being an unaltered one-dimensional Newton step in a neighborhood of a 
minimum 
the step size is the negative of a regular Newton step if \linebreak
$ \frac{{\rm d}^2}{{\rm d}t^2}
(f \circ \gamma)(t)\big|_{t=0} < 0$ and thus yields non-attractiveness for critical points that are no minima, cf. \cite{klei:controlo10}.

This approach, however, uses second order information of the cost function and is often computationally too expensive.
An alternative approach is the Riemmanian adaption of the backtracking
line search, described in Algorithm~\ref{algo:backtracking} below. The new iterate is then given by
\begin{equation}
x_{i+1}=\gamma(a_i),
\end{equation}
where $a_i$ is either obtained by backtracking or by Eq.
\eqref{def:approx_stepsize_newton}.
\begin{algorithm}
\caption{Backtracking line search on manifolds}\label{algo:backtracking}
	\SetAlgoNoLine
	\SetKwHangingKw{SI}{Step 1:}
	\SetKwHangingKw{SII}{Step 2:}
	\SetKwHangingKw{SIII}{Step 3:}
	
	\SI{Choose $\overline{a} > 0,\ c,\rho \in (0,1)$ and 
		set $a \gets \overline{a}$}
	
	\SII{{\bf repeat} until $(f \circ \gamma)(a) \leq 
			f(x_i) + c\ a\ g_x(G_i, H_i)$}
		{\hspace{20mm}{$a \gets \rho\ a$}\;}
		{\hspace{16mm}{\bf end repeat};}
	
	\SIII{Choose step-size $a_i^{\textrm{backtrack}}:=a$}
\end{algorithm}
\subsection{Search direction update}

In order to compute the new search direction $H_{i+1} \in T_{x_{i+1}}M$, we need to 
transport $H_i$ and $G_i$, which are tangent to $x_i$, to the tangent space $T_{x_{i+1}}M$.
This is done via parallel transport along the geodesic $\gamma$, which we denote by
\begin{equation}
\tau \colon T_{x_{i}}M \to T_{x_{i+1}}M.
\end{equation}
The updated search direction is now chosen 
according to a Riemannian adaption of
the Hestenes-Stiefel formula, or any other CG formula known from the Euclidean case, cf. \cite{noce:no99}.
Specifically, we have
\begin{align}
H_{i+1}=-G_{i+1}+r_i \tau H_i,
\end{align} 
where the most common formulas for $r_i$ read in the manifold
setting as 
\begin{equation}
\begin{aligned}\label{eq:different gammas}
r_i^{HS}&=\tfrac{ g( G_{i+1}, G_{i+1}- \tau G_{i} )}
{g (\tau H_i, G_{i+1}-\tau G_{i})} &&\text{(Hestenes-Stiefel)}  \\[.2cm]
r_i^{PR}&= \tfrac{ g( G_{i+1}, G_{i+1}- \tau G_{i})}
{\|G_i\|_g^2} &&\text{(Polak-Ribi\`{e}re)} \\[.2cm]
r_i^{FR}&= \tfrac{ \| G_{i+1}\|_g^2 }
{\|G_i\|_g^2}  &&\text{(Fletcher-Reeves)} \\[.2cm]
r_i^{DY}&= \tfrac{  \| G_{i+1}\|_g^2}
{g (\tau H_i,G_{i+1}-\tau G_{i})} &&\text{(Dai-Yuan)} \\[.2cm]
r_i^{*}&= - \tfrac{ g( G_{i+1}, G_{i+1}- \tau G_{i} )}
{g ( H_i , G_{i})}.
\end{aligned} 
\end{equation}
Albeit the nice performance in applications, convergence 
analysis of CG methods on smooth manifolds is still an open problem. To the best 
of the authors' knowledge, the only partial convergence result is
provided in 
\cite{Smith:1994gb}.

\section{Geometry of the Grassmannian}
\label{sec:04}
The complex Grassmannian $\Gr_{m,n}$ is
defined as the set of complex $m$-dimensional $\C$-linear subspaces of
$\C^n$. It 
provides a natural generalization of the familiar complex projective
spaces. We denote the unitary group by
\begin{equation}
  \label{eq:5}
\U_{n}:=\{X\in\C^{n\times n}|X^{\hop}X=I_n\},
\end{equation}
where $(\cdot)^\hop$ denotes complex
conjugate transpose,  and $I_n$ is the $(n \times n)$-identity matrix. For computational purposes it makes sense to identify  the Grass\-mannian $\Gr_{m,n}$ with a set
of self-adjoint Hermitian projection operators as
\begin{equation}
  \label{eq:7}
  \Gr_{m,n}:= \{P \in \C^{n\times n}\ | \ P^{\hop}=P, P^2=P,\tr P=m\},
\end{equation}
i.e. the smooth manifold of rank $m$ Hermitian projection operators of
$\C^n$. Here, $\tr(\cdot)$ is the trace of a matrix.
In the sequel we describe the Riemannian
geometry directly for the submanifold $\Gr_{m,n}$
of $\C^{n\times n}$. As we will see, this approach has advantages
that simplify both the analysis and the design of CG-based
algorithms. We begin by recalling facts about the complex 
Grassmannian \cite{Husemoller:1993hl,Onishchik:1994pi}. Let
\begin{equation}
  \label{eq:8}
\uu_{n}\!:=\!\{\Omega\in \C^{n\times n} |\Omega^{\hop}\!=\!-\Omega\}\quad\text{and}\quad
\Herm_n \!:=\!\imath \uu_{n} 
\end{equation} 
denote the \emph{real} $n^2$-dimensional vector spaces of
skew-Hermitian and Hermitian matrices, respectively. Here we
  follow the terminology in group theory, with $\uu_n$ being the Lie
  algebra for the unitary Lie group $\U_n$. In particular, $\e^{\uu_n}=\U_n$,
 where $\e^{( \cdot )}$ is the matrix exponential function.
\begin{theorem}
The Grassmannian $\Gr_{m,n}$ is a real, smooth, and 
compact submanifold of $\Herm_{n}$ of real dimension $2m(n-m)+1$.
Moreover, the tangent space at an
element $P\in \Gr_{m,n}$ is given as
\begin{equation}
  \label{eq:11}
  T_P\Gr_{m,n}=\{P\Omega-\Omega P\ | \ \Omega\in \uu_{n}\}.
\end{equation}
\end{theorem}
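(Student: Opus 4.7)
The plan is to realize $\Gr_{m,n}$ as a single orbit of the smooth conjugation action $U \cdot P := U P U^{\hop}$ of $\U_n$ on $\Herm_n$, since this approach delivers smoothness, compactness, the dimension, and the tangent space description in one package.

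First I would verify transitivity. Every $P \in \Gr_{m,n}$ is Hermitian with $P^2 = P$, so by the spectral theorem it is unitarily diagonalizable with eigenvalues in $\{0,1\}$, and the trace condition forces exactly $m$ of them to equal $1$. Hence there exists $U \in \U_n$ with $U^{\hop} P U = P_0 := \operatorname{diag}(I_m, 0)$, so every $P \in \Gr_{m,n}$ lies on the orbit through $P_0$. The stabilizer of $P_0$ is the block-diagonal subgroup isomorphic to $\U_m \times \U_{n-m}$. Standard homogeneous-space theory then identifies $\Gr_{m,n}$ with the quotient $\U_n/(\U_m \times \U_{n-m})$, a compact smooth manifold of real dimension $n^2 - m^2 - (n-m)^2 = 2m(n-m)$; moreover the orbit is an embedded submanifold of $\Herm_n$ because $\U_n$ is compact and the action is therefore proper. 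Compactness of $\Gr_{m,n}$ itself follows as the continuous image of the compact group $\U_n$.

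Next I would extract the tangent space. For any $\Omega \in \uu_n$, the curve $\gamma_\Omega(t) := \e^{t\Omega} P \e^{-t\Omega}$ lies entirely in $\Gr_{m,n}$ and has $\dot{\gamma}_\Omega(0) = \Omega P - P \Omega$; replacing $\Omega$ by $-\Omega$ yields the inclusion $\{P \Omega - \Omega P : \Omega \in \uu_n\} \subseteq T_P \Gr_{m,n}$. For the reverse inclusion I would match dimensions. By $\U_n$-equivariance of both sides it suffices to work at $P = P_0$. Decomposing $\Omega \in \uu_n$ into $2 \times 2$ blocks with diagonal blocks in $\uu_m$ and $\uu_{n-m}$ and off-diagonal block $B \in \C^{m \times (n-m)}$, a short computation identifies $P_0 \Omega - \Omega P_0$ with the Hermitian matrix having off-diagonal block $B$ and vanishing diagonal blocks. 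Thus the image is the full $2m(n-m)$-real-dimensional space of such matrices, which matches $\dim \Gr_{m,n}$, and equality follows.

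The main obstacle I expect is the dimension bookkeeping, namely seeing cleanly that the kernel of the infinitesimal action $\Omega \mapsto P_0 \Omega - \Omega P_0$ coincides with the Lie algebra $\uu_m \oplus \uu_{n-m}$ of the stabilizer, so that the image has the expected dimension $\dim \uu_n - \dim(\uu_m \oplus \uu_{n-m}) = 2m(n-m)$. Once this identification is in hand, the submanifold property, compactness, dimension, and the tangent space formula all fall out of the homogeneous-space picture uniformly; this also suggests that the ``$+1$'' in the stated real dimension is a typographical slip, the correct value being $2m(n-m)$.
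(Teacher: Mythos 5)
Your proposal is correct, and it fills in a proof that the paper itself omits (the theorem is quoted from the literature; the only hint the paper gives is its later remark that $\Gr_{m,n}$ is a homogeneous space under conjugation by $\U_n$, which is exactly the structure you exploit). The orbit argument is sound: transitivity via the spectral theorem, the stabilizer of $\einheit=\operatorname{diag}(I_m,0)$ being $\U_m\times\U_{n-m}$, embeddedness and compactness from compactness of $\U_n$, and the tangent-space identity by combining the curve $t\mapsto \e^{t\Omega}P\e^{-t\Omega}$ with the block computation $[\einheit,\Omega]=\bigl[\begin{smallmatrix}0&-B^{\hop}\\-B&0\end{smallmatrix}\bigr]$, whose image is the full $2m(n-m)$-real-dimensional off-diagonal Hermitian block space, matching $\dim\uu_n-\dim(\uu_m\oplus\uu_{n-m})$. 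One point deserves emphasis: your dimension count is the correct one, and the ``$+1$'' in the theorem as stated is indeed an error (for $m=1$, $n=2$ the manifold is $\mathbb{CP}^1\cong S^2$, of real dimension $2=2m(n-m)$, not $3$); you were right to flag it rather than try to prove the stated value.
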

It is useful for further analysis to define the linear operator 
\begin{equation}
  \label{eq:12}
 \ad_P:\C^{n\times n} \to \C^{n\times n},\quad
\ad_P(X):=[P,X]:=PX-XP.
\end{equation}
\begin{lemma} 
\label{lem:adP3}
(\cite{helm:un07} for the real case)
For any $P\in \Gr_{m,n}$ the minimal polynomial of
$\ad_P$ is equal to
$s^3 - s$. Thus $\ad_{P}^3=\ad_{P}$, i.e., 
\begin{equation}
  \label{eq:13}
  \ad_{P}^2H =[P,[P,H]]=H\quad\forall H \in
T_{P}\Gr_{m,n}.
\end{equation}
\end{lemma}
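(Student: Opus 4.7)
The proof has two assertions: first that $\ad_P^3 = \ad_P$ (so $s^3-s$ annihilates $\ad_P$), and second that it is the \emph{minimal} polynomial (so no proper divisor annihilates $\ad_P$). The identity $\ad_P^2 H = H$ for $H\in T_P\Gr_{m,n}$ is then immediate.

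My plan is to start with a direct computation exploiting $P^2=P$. Expanding,
\begin{equation*}
\ad_P^2(X) = P(PX-XP)-(PX-XP)P = PX - 2\,PXP + XP,
\end{equation*}
and applying $\ad_P$ once more, every term containing $P^2$ collapses back to $P$, and after cancellation one recovers
$\ad_P^3(X) = PX - XP = \ad_P(X)$.
Hence $s^3-s$ annihilates $\ad_P$, and the minimal polynomial divides $s(s-1)(s+1)$.

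To see that no proper factor suffices, I would exhibit eigenvectors of $\ad_P$ for each of the three eigenvalues $-1,0,+1$, using the orthogonal decomposition $\C^n=\im P\oplus\ker P$ of dimensions $m$ and $n-m$ guaranteed by $P^\hop=P$, $P^2=P$, $\tr P=m$. Picking $v\in\im P$, $w\in\ker P$ (both nonzero, which requires $0<m<n$ — the interesting case, as otherwise $\Gr_{m,n}$ is a point and $\ad_P\equiv 0$), the rank-one matrices $vw^\hop$, $wv^\hop$, $vv^\hop$ satisfy
\begin{equation*}
\ad_P(vw^\hop)=vw^\hop,\quad \ad_P(wv^\hop)=-wv^\hop,\quad \ad_P(vv^\hop)=0,
\end{equation*}
so each of the three linear factors $s-1$, $s+1$, $s$ must divide the minimal polynomial. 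Combined with the divisibility from the previous step, the minimal polynomial equals $s^3-s$.

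Finally, for $H\in T_P\Gr_{m,n}$ the description \eqref{eq:11} gives $H=\ad_P(\Omega)$ for some $\Omega\in\uu_n$, and thus
\begin{equation*}
\ad_P^2 H = \ad_P^2(\ad_P\Omega) = \ad_P^3(\Omega) = \ad_P(\Omega) = H.
\end{equation*}
I do not anticipate a real obstacle; the only subtlety is remembering to handle the trivial case $m\in\{0,n\}$ separately (or exclude it as implicit from the nontriviality of the Grassmannian), and to notice that the parametrization $H=\ad_P\Omega$ is precisely what reduces the tangent-space identity to the already-established $\ad_P^3=\ad_P$.
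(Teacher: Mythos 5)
Your proposal is correct and complete. Note that the paper itself offers no proof of this lemma at all --- it simply defers to \cite{helm:un07} for the real case --- so any self-contained argument is welcome here. Your two ingredients are exactly the standard ones: the direct expansion using $P^2=P$ to get $\ad_P^3=\ad_P$ (your intermediate formula $\ad_P^2(X)=PX-2PXP+XP$ and the subsequent collapse are correct), and the exhibition of eigenvectors for each of $0,\pm 1$ to rule out proper divisors of $s^3-s$. The cited reference obtains the eigenvalues by conjugating $P$ to the standard projector $\einheit$ and reading off $\ad_{\einheit}(X)$ blockwise, whereas you use the rank-one matrices $vw^{\hop}$, $wv^{\hop}$, $vv^{\hop}$ built from $v\in\im P$, $w\in\ker P$; the two are equivalent, and your version has the small advantage of not requiring a unitary diagonalization. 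You are also right, and more careful than the statement itself, in flagging that the minimality claim genuinely fails for $m\in\{0,n\}$ (there $\ad_P=0$ and the minimal polynomial is $s$), so the implicit hypothesis $0<m<n$ is needed. The final reduction of \eqref{eq:13} to $\ad_P^3=\ad_P$ via the parametrization $H=\ad_P(\Omega)$, $\Omega\in\uu_n$, from \eqref{eq:11} is exactly the intended use of the tangent-space description.
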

In the sequel, we will always endow
$\Herm_{n}$ with the Frobenius inner product, defined by
\begin{equation}
  \label{eq:98}
\langle X,Y\rangle := \tr(XY).
\end{equation}
 The Euclidean Riemannian
metric $g_P$ on $\Gr_{m,n}$ induced by the embedding space
$\Herm_{n}$ is defined by the restriction of \eqref{eq:98} to the
tangent spaces, i.e.
\begin{equation}
  \label{eq:97}
  g_P(H_1,H_2) = \tr (H_1 H_2)\quad\forall P \in \Gr_{m,n} \text{ and }\forall H_1, H_2\in T_{P}\Gr_{m,n}. 
\end{equation}
\begin{lemma} 
\label{lem:normalGR}
(\cite{helm:un07} for the real case)
Let $P\in \Gr_{m,n}$ be arbitrary.
The normal subspace at $P$ in $\Herm_{n}$ is given by
$
  N_{P}\Gr_{m,n}=\{X-\ad_{P}^2X \ | \ X\in \Herm_{n}\}.
$
The linear map
\begin{equation}
  \label{eq:49}
  \Pi_P: \Herm_{n}\to \Herm_{n},\; X\mapsto \ad_{P}^{2}X=[P,[P,X]]
\end{equation}
is the self-adjoint Hermitian projection operator onto
$T_{P}\Gr_{m,n}$ with kernel $N_{P}\Gr_{m,n}$.
\end{lemma}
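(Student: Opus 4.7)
The plan is to prove both parts of the lemma at once by identifying the operator $\Pi_P := \ad_P^2$ as the orthogonal projection of $\Herm_n$ onto $T_P\Gr_{m,n}$; the claimed description of $N_P\Gr_{m,n}$ then drops out as the image of $\id - \Pi_P$. To establish that $\Pi_P$ is this orthogonal projector, I would verify three properties separately: (a) $\Pi_P(\Herm_n) \subseteq T_P\Gr_{m,n}$, (b) $\Pi_P$ is idempotent, and (c) $\Pi_P$ is self-adjoint with respect to the Frobenius inner product \eqref{eq:98}. Any self-adjoint idempotent on a finite-dimensional inner product space is, by elementary linear algebra, the orthogonal projector onto its image, so these three facts together suffice.

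Properties (a) and (b) are immediate consequences of Lemma \ref{lem:adP3}. For (a), observe that $X \in \Herm_n$ implies $[P,X] \in \uu_n$ (since $P$ is Hermitian), hence $\Pi_P X = [P,[P,X]]$ has the form \eqref{eq:11} and lies in $T_P\Gr_{m,n}$. For (b), using $\ad_P^3 = \ad_P$ we get $\Pi_P^2 = \ad_P^4 = \ad_P \cdot \ad_P^3 = \ad_P^2 = \Pi_P$. The same lemma also gives $\Pi_P H = H$ for every $H \in T_P\Gr_{m,n}$, so in fact $\im \Pi_P = T_P\Gr_{m,n}$.

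Property (c) is the main computational step, though still routine: a single use of cyclicity of the trace yields the identity $\tr([P,Z]W) = -\tr(Z[P,W])$, and applying it twice shows that $\tr(\ad_P^2 X \cdot Y) = \tr(X \cdot \ad_P^2 Y)$ for all $X, Y \in \Herm_n$. I expect the main obstacle to be keeping track of signs carefully, because $\ad_P$ itself interchanges $\Herm_n$ and $\uu_n$ rather than preserving $\Herm_n$, so self-adjointness only makes intrinsic sense after the two applications recombine; one has to be sure that the minus signs from each application cancel correctly.

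Once (a)--(c) are in place, $\Pi_P$ is the orthogonal projection of $\Herm_n$ onto $T_P\Gr_{m,n}$, so $N_P\Gr_{m,n} = \ker \Pi_P$. For any projection operator the kernel coincides with the image of the complementary projection, $\ker \Pi_P = \im(\id - \Pi_P)$, which gives exactly $N_P\Gr_{m,n} = \{X - \ad_P^2 X \mid X \in \Herm_n\}$, completing both assertions of the lemma.
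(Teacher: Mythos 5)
Your proof is correct. The paper itself offers no proof of this lemma --- it simply defers to the cited reference for the real case --- so there is nothing to compare against line by line, but your argument is the standard one and every step checks out: $\ad_P$ maps $\Herm_n$ into $\uu_n$ and back, so $\ad_P^2$ preserves $\Herm_n$ and lands in $T_P\Gr_{m,n}$ by \eqref{eq:11}; idempotence follows from $\ad_P^3=\ad_P$ (Lemma~\ref{lem:adP3}); and the two minus signs from the identity $\tr([P,Z]W)=-\tr(Z[P,W])$ cancel to give self-adjointness, after which the elementary fact that a self-adjoint idempotent is the orthogonal projector onto its image, together with $\ker\Pi_P=\im(\id-\Pi_P)$, yields both assertions.
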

In general, a geodesic is a minimizer of
the variational problem (\ref{eq:lengthfunctional}), i.e. it is the
solution of the corresponding Euler-Lagrange equation, the latter
being a second order ordinary differential equation. The following result characterizes
the geodesics on $\Gr_{m,n}$. 
\begin{theorem} 
\label{theo:GeoGrass}
(\cite{helm:un07} for the real case)
The geodesics of $\Gr_{m,n}$ are
exactly the solutions of the second order differential equation
$
\ddot{P} + [\dot{P},[\dot{P},P]] = 0
$.
The unique geodesic $P(t)$ with initial conditions $P(0)=P_0 \in
\Gr_{m,n}$, $\dot{P}(0)=H \in
T_{P_0}\Gr_{m,n}$ is given by
\begin{equation}\label{eq:geodesic2}
P_H(t) = \e^{t[H,P_0]}P_0\e^{-t[H,P_0]}.
\end{equation}
\end{theorem}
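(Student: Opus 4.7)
The plan is to use that, for any Riemannian submanifold of a Euclidean space, the Euler--Lagrange equation for the length functional \eqref{eq:lengthfunctional} reduces to the condition that the ambient acceleration $\ddot P(t)$ be normal to the submanifold at every $t$. Since the metric on $\Gr_{m,n}$ is the restriction of the Frobenius inner product on $\Herm_n$, and since Lemma~\ref{lem:normalGR} identifies the normal space as $N_P\Gr_{m,n}=\ker\Pi_P=\ker\ad_P^2$, the geodesic equation becomes $\ad_P^2\ddot P = 0$. To translate this into the stated form I would differentiate the identity $[P,[P,\dot P]] = \dot P$, which holds because $\dot P\in T_P\Gr_{m,n}$ and $\ad_P^2$ acts as the identity on the tangent space by Lemma~\ref{lem:adP3}. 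The product rule combined with $[\dot P,\dot P]=0$ yields
\[
\ddot P = [\dot P,[P,\dot P]] + [P,[P,\ddot P]] = -[\dot P,[\dot P,P]] + \ad_P^2\ddot P,
\]
so $\ad_P^2\ddot P = 0$ is equivalent to $\ddot P + [\dot P,[\dot P,P]] = 0$, as asserted.

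For the closed form, I set $\Omega:=[H,P_0]$. Since $H$ and $P_0$ are Hermitian, $\Omega^{\hop} = P_0 H - H P_0 = -\Omega$, so $\Omega\in\uu_n$ and $\e^{t\Omega}\in\U_n$. Hence $P_H(t) = \e^{t\Omega}P_0\e^{-t\Omega}$ is a unitary conjugate of $P_0$, and unitary conjugation preserves $P^{\hop}=P$, $P^2=P$, and $\tr P$, so $P_H(t)\in\Gr_{m,n}$ for all $t$. Because $\Omega$ commutes with $\e^{t\Omega}$, differentiation gives $\dot P_H(t) = [\Omega,P_H(t)]$, and at $t=0$ this evaluates to $[[H,P_0],P_0] = \ad_{P_0}^2 H = H$ by Lemma~\ref{lem:adP3}, matching the initial conditions.

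It then remains to verify $\ad_{P_H}^2\ddot P_H = 0$, which, since $\ad_P^3=\ad_P$ forces $\ker\ad_P^2=\ker\ad_P$, is equivalent to $[P_H,\ddot P_H]=0$. With $\ddot P_H = [\Omega,[\Omega,P_H]] = [\Omega,\dot P_H]$, the Jacobi identity delivers
\[
[P_H,[\Omega,\dot P_H]] = [[P_H,\Omega],\dot P_H] + [\Omega,[P_H,\dot P_H]].
\]
The first summand is $[-\dot P_H,\dot P_H]=0$. For the second, I would exploit that conjugation by $\e^{-t\Omega}$ is a Lie algebra automorphism fixing $\Omega$; it therefore transports the bracket to its $t=0$ counterpart $[\Omega,[P_0,[\Omega,P_0]]] = [\Omega,-[P_0,H]] = [\Omega,\Omega]=0$, where I used $[P_0,H]=-\Omega$. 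Uniqueness is then standard, as the geodesic equation is a smooth second-order ODE on the ambient vector space $\Herm_n$ and the prescribed $P_0$, $H$ pin down the solution.

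The main obstacle I anticipate is keeping the nested commutators organized without drowning in algebra. The trick that makes the verification clean is conjugation by $\e^{-t\Omega}$, which reduces every identity along the curve to its counterpart at $t=0$ and lets me exploit Lemma~\ref{lem:adP3} directly rather than manipulating $P_H(t)$ explicitly.
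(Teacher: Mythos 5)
Your argument is correct. The paper itself gives no proof of this theorem (it only cites the reference for the real case), but your derivation is exactly the natural one given the framework the paper sets up: the normal-acceleration characterization of geodesics for an isometrically embedded submanifold, combined with Lemma~\ref{lem:adP3} and Lemma~\ref{lem:normalGR} to convert $\Pi_P\ddot P=0$ into the stated commutator ODE, and then a direct verification that the unitary conjugation orbit solves it. The only blemish is a harmless sign slip in the final chain ($[\Omega,[P_0,[\Omega,P_0]]]=[\Omega,[P_0,H]]$ rather than $[\Omega,-[P_0,H]]$), which is immaterial since $[P_0,H]=-\Omega$ makes the bracket vanish either way.
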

\begin{definition}
We define the \emph{Riemannian exponential map} as
\begin{equation}
\exp_{P_0} \colon T_{P_0} \Gr_{m,n} \to \Gr_{m,n}, \quad H \mapsto P_H(1).
\end{equation}
\end{definition}

As outlined above, we need the concept of
parallel transport along geodesics to give vector addition a
well defined meaning. 
\begin{lemma}
[\cite{HuperSilva-Leite:2007aa} for the real case] For $P\in \Gr_{m,n}$ and $G_0\in T_P\Gr_{m,n}$
the parallel transport of $G_0$ along the geodesic $P_H(t)$  is
given by
\begin{equation}
  G_H(t)=\e^{t[H,P]}G_0\e^{-t[H,P]}.
\end{equation}
\end{lemma}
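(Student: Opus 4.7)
The plan is to characterize parallel transport intrinsically and then verify that the curve $t \mapsto G_H(t)$ satisfies this characterization. Because $\Gr_{m,n}$ carries the metric induced from the ambient Euclidean space $\Herm_n$, a vector field $G(t)$ along a curve $P(t) \subset \Gr_{m,n}$ is parallel (in the Levi-Civita sense) if and only if $G(t) \in T_{P(t)}\Gr_{m,n}$ and $\dot{G}(t) \in N_{P(t)}\Gr_{m,n}$ for every $t$. The initial condition $G_H(0) = G_0$ is built in by inspection, so two conditions remain.

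For tangency, set $\Xi := [H,P]$. Since $H$ and $P$ are both Hermitian, $\Xi \in \uu_n$, so $U(t) := \e^{t\Xi}$ is unitary. By \eqref{eq:11} we may write $G_0 = [P,\Omega]$ for some $\Omega \in \uu_n$. Conjugation by $U(t)$ preserves $\uu_n$ and distributes over the commutator, which yields
\begin{equation}
G_H(t) = U(t)[P,\Omega]U(t)^{\hop} = [U(t)P U(t)^{\hop},\, U(t)\Omega U(t)^{\hop}] = [P_H(t),\,\Omega(t)],
\end{equation}
with $\Omega(t) := U(t)\Omega U(t)^{\hop} \in \uu_n$. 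Hence $G_H(t) \in T_{P_H(t)}\Gr_{m,n}$ for all $t$.

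For the normality of $\dot{G}_H(t)$, I would differentiate $G_H(t) = U(t) G_0 U(t)^{\hop}$ using $\dot{U} = \Xi U = U \Xi$ to get $\dot{G}_H(t) = [\Xi, G_H(t)]$. By Lemma~\ref{lem:normalGR}, being normal at $P_H(t)$ is equivalent to being annihilated by $\ad_{P_H(t)}^{2}$. Since $\ad_{P_H(t)}(X) = U(t)\,\ad_P\bigl(U(t)^{\hop} X U(t)\bigr)\,U(t)^{\hop}$ and $\Xi$ commutes with $U(t)$, the condition at arbitrary $t$ collapses to the $t = 0$ identity
\begin{equation}
\ad_P^{2}\,\bigl[[H,P],\,G_0\bigr] = 0 \qquad \text{for all } H, G_0 \in T_P\Gr_{m,n}.
\end{equation}

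This last identity is the main (and essentially only nontrivial) algebraic point. I would handle it via the block decomposition induced by $P$: every $X \in \Herm_n$ splits as $X = PXP + PX(I{-}P) + (I{-}P)XP + (I{-}P)X(I{-}P)$, and Lemma~\ref{lem:adP3} together with $P^2 = P$ imply that tangent vectors at $P$ have vanishing diagonal blocks, whereas vectors with vanishing off-diagonal blocks lie in $N_P\Gr_{m,n}$ and are annihilated by $\ad_P^{2}$. Writing $H = H_{01} + H_{10}$ and $G_0 = G_{01} + G_{10}$ in this notation, the summands of $[[H,P], G_0]$ either vanish (because of an internal $P(I{-}P) = 0$) or contribute only diagonal blocks, so $[[H,P], G_0]$ itself sits in $N_P\Gr_{m,n}$, and the identity follows. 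The main obstacle is the sign bookkeeping among the four surviving cross terms; once that is in place, the argument is essentially a transcription of the real-case proof in \cite{HuperSilva-Leite:2007aa}.
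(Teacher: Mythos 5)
Your argument is correct and complete. Note that the paper itself gives no proof of this lemma: it only cites \cite{HuperSilva-Leite:2007aa} for the real case and remarks that the complex case follows by straightforward adaptation, so your write-up supplies the verification the paper omits. Your strategy --- show $G_H(t)$ stays tangent and that its ambient derivative $\dot G_H(t)=[\,[H,P],G_H(t)]$ is normal, using the submanifold characterization of the Levi-Civita connection for the induced metric \eqref{eq:97} together with Lemma~\ref{lem:normalGR}, and reducing to $t=0$ via unitary equivariance (valid since $[H,P]\in\uu_n$ commutes with $\e^{t[H,P]}$) --- is the standard one for Riemannian submanifolds and works here without modification. The only comment is that the ``sign bookkeeping'' you defer at the end is actually vacuous: after conjugating so that $P=\einheit$, tangent vectors are block off-diagonal Hermitian matrices, $[H,\einheit]$ is again block off-diagonal, and the commutator of two block off-diagonal matrices is block \emph{diagonal}. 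Explicitly, with $H=\left[\begin{smallmatrix}0&B\\B^\hop&0\end{smallmatrix}\right]$ and $G_0=\left[\begin{smallmatrix}0&C\\C^\hop&0\end{smallmatrix}\right]$ one gets
\begin{equation*}
[[H,\einheit],G_0]=\left[\begin{smallmatrix}-BC^\hop-CB^\hop&0\\0&B^\hop C+C^\hop B\end{smallmatrix}\right]\in\ker\ad_{\einheit}\subset\ker\ad_{\einheit}^2,
\end{equation*}
so no cancellation among cross terms is needed and the key identity $\ad_P^2\bigl[[H,P],G_0\bigr]=0$ is immediate.
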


Note, that the complex case considered here follows by a straightforward adaption of the proof of the real case in \cite{HuperSilva-Leite:2007aa}.

\section{Karcher mean}
\label{sec:05}
\subsection{The distance between two complex subspaces}

In the first step we investigate the Riemannian distance of two complex subspaces.
For convenience, we denote the standard projector by
\begin{equation}
\einheit:= \left[
  \begin{smallmatrix}
    I_m&0\\0&0
  \end{smallmatrix}
\right].
\end{equation}
Let $P \in \Gr_{m,n}$, and assume for the moment that $P$ is sufficiently close to $\einheit$, i.e.
that there is a \emph{unique} geodesic emanating from $\einheit$ to $P$. 
Together with Eq. \eqref{eq:geodesic2} this implies the existence of a \emph{unique}
 $Z=Z(P)\in
B_0\subset\C^{(n-m)\times n}$ where $B_0$ is a sufficiently small open
ball around the zero matrix $0$,
such that
\begin{equation}
  \label{eq:2a}
  P=\e^{\left[
  \begin{smallmatrix}
    0&-Z^\hop\\Z&0
  \end{smallmatrix}
\right]}\einheit \e^{-\left[
  \begin{smallmatrix}
    0&-Z^\hop\\Z&0
  \end{smallmatrix}
\right]}.
\end{equation}
Hence, $Z$ can be considered as a function
of $P$, implicitly defined by \eqref{eq:2a}. 
\begin{figure}[t]
 \begin{center}
        \setlength{\unitlength}{1mm}
        \begin{picture}(45, 33)
   \qbezier(-25,17)(-1,20)(10,-2)
   \qbezier(10,-2)(40,22)(65,9)
   \qbezier(-25,17)(25,52)(65,9)
       \qbezier(-5,21)(25,30)(35,15)
     \textcolor{blue}{         \put(1.8,22.7){\vector(4,1){14}}}
   \textcolor{white}{\put(0,22.7){\vector(1,2){4}}}
          \put(0,22.7){\circle*{1}}

     \put(-14,18){\small{$\gamma_i(t)$}}
     \put(10,27){\small{$H_i$}}
          \put(2,20){$x_{i}$}
     %
        \put(9.5,5){$M$}
  \textcolor{blue}{        \put(22.5,29){\line(-3,1){19.3}}
        \put(22.5,29){\line(-4,-3){15.3}}
          \put(-12,24){\line(4,3){15.3}}
          \put(-12,24){\line(3,-1){19.3}}
          }
{
  \put(30.5,19.2){\circle*{1}}
  \put(23,15){$x_{i+1}$} }
{
\textcolor{blue}{        \put(50,15){\line(-4,5){12.5}}
        \put(50,15){\line(-5,-3){15}}
          \put(20,26){\line(4,1){17.5}}
          \put(20,26){\line(3,-4){15}}
          }
\textcolor{blue}{   \put(26.8,19.2){\vector(3,-2){12}}}
 \textcolor{blue}{\put(24.5,19.2){\vector(2,0){10}}}
 \put(35,15){\small{$\tau(H_i)$}}
 \put(34,20){\small{$ G_{i+1}$}} }
{
 \textcolor{red}{\put(22.3,19.2){\vector(2,3){6}}}
 \put(21,24){\small{$H_{i+1}$}}
 }
        \end{picture}         
\end{center}
\caption{Illustration of the geometric conjugate gradient method.}\label{fig3}
\end{figure}
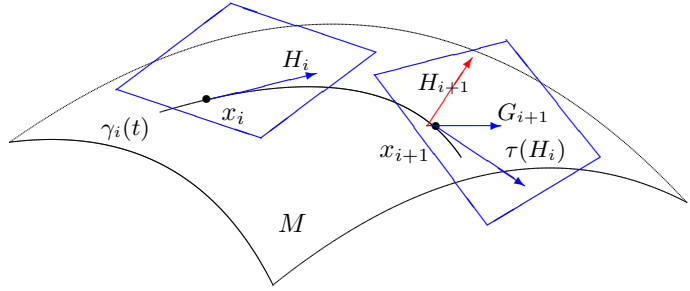
\begin{lemma}[cf. Fig. \ref{fig2}]\label{lemma:4}
With
\begin{equation}\label{eq:27}
   \mathcal{Z}(P):=\left[
  \begin{smallmatrix}
    0&-Z^\hop(P)\\Z(P)&0
  \end{smallmatrix}
\right],
\end{equation}
the geodesic distance from $\einheit$ to $P$ is given by
$
\dist(\einheit,P)=\| [\mathcal{Z}(P),\einheit] \|.
$
\end{lemma}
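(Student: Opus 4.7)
The plan is to exploit the explicit geodesic formula in Theorem \ref{theo:GeoGrass} together with the fact that Riemannian geodesics have constant speed, so that once we identify the unique tangent vector $H\in T_{\einheit}\Gr_{m,n}$ with $P_H(1)=P$, the distance is simply $\|H\|_g$.

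To identify $H$, I would match the two representations $P_H(1)=\e^{[H,\einheit]}\einheit\e^{-[H,\einheit]}$ from \eqref{eq:geodesic2} and $P=\e^{\mathcal{Z}(P)}\einheit\e^{-\mathcal{Z}(P)}$ from \eqref{eq:2a}. This suggests seeking $H$ with $[H,\einheit]=\mathcal{Z}(P)$, and the natural candidate is
\begin{equation*}
H:=[\mathcal{Z}(P),\einheit].
\end{equation*}
Two checks are then required. First, \emph{tangency}: writing $H=\einheit\Omega-\Omega\einheit$ with $\Omega:=-\mathcal{Z}(P)\in\uu_n$ places $H$ in $T_\einheit\Gr_{m,n}$ via \eqref{eq:11}. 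Second, \emph{matching of exponents}: since $H=-\ad_\einheit\mathcal{Z}(P)$, one has $[H,\einheit]=-\ad_\einheit H=\ad_\einheit^2\mathcal{Z}(P)$, and a brief $2\times 2$ block calculation using $\einheit^2=\einheit$ together with the vanishing diagonal blocks of $\mathcal{Z}(P)$ shows $\ad_\einheit^2\mathcal{Z}(P)=\mathcal{Z}(P)$; equivalently, Lemma \ref{lem:adP3} implies that $\ad_\einheit^2$ acts as the identity on block-antidiagonal matrices, which is precisely the form of $\mathcal{Z}(P)$. Hence $P_H(1)=P$.

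Since the geodesic $t\mapsto P_H(t)$ has constant speed, $L\bigl(P_H\vert_{[0,1]}\bigr)=\|\dot{P}_H(0)\|_g=\|H\|_g$, and the closeness assumption on $P$ ensures that this geodesic is length-minimizing, so that
\begin{equation*}
\dist(\einheit,P)=\|H\|_g=\|[\mathcal{Z}(P),\einheit]\|_g.
\end{equation*}

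The main subtle point to handle is the well-definedness of the matching procedure. Any skew-Hermitian matrix decomposes into a block-diagonal part, which commutes with $\einheit$ and so is invisible to the conjugation $\Omega\mapsto\e^{\Omega}\einheit\e^{-\Omega}$, and a block-antidiagonal part. The smallness assumption that $Z$ lies in a small ball around $0$ is exactly what guarantees that, restricted to this horizontal complement, the exponential map is a local diffeomorphism onto a neighborhood of $\einheit$; this fixes $\mathcal{Z}(P)$, and the injectivity of $\ad_\einheit$ on $T_\einheit\Gr_{m,n}$ (a direct consequence of Lemma \ref{lem:adP3}) then fixes $H$ uniquely.
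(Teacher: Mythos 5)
Your proposal is correct and follows essentially the same route as the paper: the paper's proof simply writes the geodesic as $\gamma(t)=\e^{t\mathcal{Z}(P)}\einheit\e^{-t\mathcal{Z}(P)}$, observes $\dot{\gamma}(t)=\e^{t\mathcal{Z}(P)}[\mathcal{Z}(P),\einheit]\e^{-t\mathcal{Z}(P)}$ so the speed is constantly $\|[\mathcal{Z}(P),\einheit]\|$, and integrates the length functional over $[0,1]$. Your additional verification that $H=[\mathcal{Z}(P),\einheit]$ is tangent and satisfies $[H,\einheit]=\ad_{\einheit}^2\mathcal{Z}(P)=\mathcal{Z}(P)$ (by the direct block computation, since $\mathcal{Z}(P)$ is skew-Hermitian and hence not itself covered by the tangent-space statement of Lemma~\ref{lem:adP3}) just makes explicit what the paper asserts when it calls $\gamma$ the geodesic in direction $\ad_{\mathcal{Z}(P)}(\einheit)$.
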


\begin{proof}
Let $\gamma(t)=\e^{ t \mathcal{Z}(P)}\einheit \e^{-t \mathcal{Z}(P)}$ be the 
geodesic \eqref{eq:geodesic2} emanating from $\einheit$ in direction
$\ad_{\mathcal{Z}(P)}(\einheit)$ 
with $\gamma(1)=P$. With $\dot{\gamma}(t)=\e^{ t \mathcal{Z}(P)}[\mathcal{Z}(P),\einheit]
\e^{-t \mathcal{Z}(P)}$ and by Eq. \eqref{eq:lengthfunctional} we get 
\begin{equation}
\textstyle\int_{0}^{1}\| \dot{\gamma}(t)\|\operatorname{d}t =
\textstyle\int_{0}^{1}\| [\mathcal{Z}(P),\einheit]\|\operatorname{d}t = \| [\mathcal{Z}(P),\einheit] \|=\sqrt{\tr [\mathcal{Z}(P),\einheit]^2} \label{eq:19}
\end{equation}
and the result follows.
\end{proof}

%
Let $U \subset \Gr_{m,n}$ be a neighborhood around $\einheit$ and $B \subset T_{\einheit}\Gr_{m,n}$  such that
$\exp_{\einheit}\colon B \to U$ is one-one and onto. 
Consider the function
\begin{equation}
\label{eq:31xx}
f\colon U \to \R, \quad P \mapsto  \dist^2(\einheit,P).
\end{equation}
To calculate the derivative we use an equivalent expression of $f$, namely
\begin{equation}\tag{\ref{eq:31xx}$'$}
    f(P) = -\tr \left( \einheit \ad^2_{\mathcal{Z}(P)} ( \einheit ) \right).
\end{equation}
Let $H\in T_P\Gr_{m,n}$ be an arbitrary tangent vector.
For the
directional derivative we will use the abbreviation
$\mathcal{Z}':=\DD \mathcal{Z}(P)H$,
cf. Eq. \eqref{eq:directional_der}. Therefore
\begin{equation}
  \label{eq:4}
    \DD f(P)H=-2\tr\left(\einheit \ad_{\mathcal{Z}}(\ad_{\mathcal{Z}'} (\einheit) )\right).
\end{equation}
For computing the Riemannian gradient, we need an
expression for $\ad_{\mathcal{Z}'}$. 
To that end, note that Eq. \eqref{eq:2a} is equivalent to
\begin{equation}\label{eq:2}
P = \e^{\ad_{\mathcal{Z}(P)}}(\einheit)
\end{equation}
and thus differentiating (\ref{eq:2}) 
with respect to $P$ in direction $H$ yields
\begin{equation}
  \label{eq:6aaa}
   \DD (P)H= H
=\e^{\ad_{\mathcal{Z}}}
\left(\big(
\tfrac{\id-\e^{-\ad_{\ad_{\mathcal{Z}}}}}{\ad_{\ad_{\mathcal{Z}}}}(\ad_{\mathcal{Z'}})\big) 
(\einheit) \right).
\end{equation}
Here, the operators $\e^{\ad_{\mathcal{Z}}}$ and
$\tfrac{\id-\e^{-\ad_{\ad_{\mathcal{Z}}}}}{\ad_{\ad_{\mathcal{Z}}}}$
have to be understood via their series expansion and acting on the right as usual.
By parallel transporting $\left[
  \begin{smallmatrix}
    0&K^\hop\\K&0
  \end{smallmatrix}
\right]\in T_{\einheit}\Gr_{m,n}$ with $K\in\C^{(n-m)\times m}$ along the unique geodesic connecting $\einheit$ and
$P$  it is easily seen that $H\in
T_P\Gr_{m,n}$ has the representation 
\begin{equation}
  \label{eq:10aaa}
  H= 
  \e^{\ad_{\mathcal{Z}}}\left[
  \begin{smallmatrix}
    0&K^\hop\\K&0
  \end{smallmatrix}
\right].
\end{equation}
 Using
\begin{equation}
\mathcal{K}:=\left[
  \begin{smallmatrix}
    0&-K^\hop\\K&0
  \end{smallmatrix}
\right]
\end{equation}
and (\ref{eq:10aaa}) a lengthy but straightforward computation including the
decomposition of the function
$x\mapsto\tfrac{1-\e^{-x}}{x}=\tfrac{\sinh x}{x}+\tfrac{1-\cosh x}{x}$ into even
and odd parts shows that
(\ref{eq:6aaa}) is equivalent to
\begin{equation}
  \label{eq:13aaa}
   \left[
      \begin{smallmatrix}
        0&K^\hop\\K&0
      \end{smallmatrix}
    \right]=\ad_{\mathcal{K}}(\einheit)=\left(\tfrac{\sinh\ad_{\ad_{\mathcal{Z}}}}{\ad_{\ad_{\mathcal{Z}}}}(\ad_{\mathcal{Z}'})\right) (\einheit).
\end{equation}
By assumption on $P$ being close enough to $\einheit$, the selfadjoint operator
    $\tfrac{\sinh\ad_{\ad_{\mathcal{Z}}}}{\ad_{\ad_{\mathcal{Z}}}}$ is
    invertible.  
Exploiting now the
representation property of the $\ad$-operator,
i.e. $[\ad_X,\ad_Y]=\ad_{[X,Y]}$, as well as linearity and
anti-selfadjointness, i.e. $\tr(A\ad_B(C))=-\tr(C\ad_B(A))$,  we can conclude that (\ref{eq:13aaa}) is equivalent to
    \begin{equation}
      \label{eq:14}
      \mathcal{Z}'=(\tfrac{\sinh\ad_{\mathcal{Z}}}{\ad_{\mathcal{Z}}})^{-1} (\mathcal{K}).
    \end{equation}
 In summary, 
\begin{equation}
  \label{eq:15}
  \begin{split}
   \DD f(P)H
&=-2\tr\!\Big( \einheit \ad_{\mathcal{Z}}\Big(\ad_{\left(\tfrac{\sinh\ad_{\mathcal{Z}}}{\ad_{\mathcal{Z}}}\right)^{-1} \!\!\!(\mathcal{K})}(\einheit) \Big) \Big)\\
&=2\tr\Big(\Big(\ad_{\einheit}\mathcal{Z}\Big) \Big(\ad_{\einheit}\Big(\Big(\tfrac{\sinh\ad_{\mathcal{Z}}}{\ad_{\mathcal{Z}}}\Big)^{-1}
    \!\!\!(\mathcal{K})\Big)\Big) \Big)\\
&=-2\tr\Big(\Big(\underbrace{\ad^2_{\einheit}\mathcal{Z}}_{=\mathcal{Z}}\Big)\Big(\Big(\tfrac{\sinh\ad_{\mathcal{Z}}}{\ad_{\mathcal{Z}}}\Big)^{-1}
    \!\!\!(\mathcal{K})\Big)\Big)\\
&=-2\tr\left(\mathcal{Z}\Big(\tfrac{\sinh\ad_{\mathcal{Z}}}{\ad_{\mathcal{Z}}}\Big)^{-1}
    \!\!\!(\mathcal{K})\right) = -2\tr(\mathcal{Z}\mathcal{K}).
 \end{split}
\end{equation}
The last equality in~\eqref{eq:15} is true by the self-adjointness of the operator
$(\tfrac{\sinh\ad_{\mathcal{Z}}}{\ad_{\mathcal{Z}}})^{-1}$
and $x\mapsto\tfrac{x}{\sinh x}=1-\tfrac{x^2}{6}+O(x^4)$ being an even function. In other
words, $(\tfrac{\sinh\ad_{\mathcal{Z}}}{\ad_{\mathcal{Z}}})^{-1}$ can
be considered to act 
as the identity operator to the left onto $\mathcal{Z}$ under the trace. Hence,
critical points are characterized by
\begin{equation}
  \label{eq:16}
    \DD f(P)=0\;\Longleftrightarrow \;
\Re\tr(Z^\hop K)=0\;\forall\; K\in\C^{(n-m)\times m}\;\Longleftrightarrow \;
Z=0.
\end{equation}
Together with Eq. \eqref{eq:2} this yields that
the unique critical point of $f$ is given by $P = \einheit$, as one would expect.\footnote{$f$ is defined in a neighborhood of $\einheit$ ensuring bijectivity of the Riemannian exponential.}
Moreover, from \eqref{eq:15} we can compute the Riemannian gradient of
the function $f$ by
\begin{equation}
  \label{eq:1}
  \begin{split}
    \DD f(P)H
    &=-2\tr(\mathcal{Z}\mathcal{K})=2\tr([\mathcal{Z},\einheit][\mathcal{K},\einheit])\\
    &=
    2\tr(\e^{\mathcal{Z}}[\mathcal{Z},\einheit]\e^{-\mathcal{Z}}\e^{\mathcal{Z}}[\mathcal{K},\einheit]\e^{-\mathcal{Z}})=\tr(2[\mathcal{Z},P]H).
  \end{split}
\end{equation}
Since $2[\mathcal{Z},P]\in T_P\Gr_{m.n}$ and since the trace is the Riemannian metric we can conclude, cf. \eqref{eq:directional_der}, that
\begin{equation}
  \label{eq:6}
    \operatorname{grad} f(P)= 2 [\mathcal{Z},P]. 
\end{equation}
Up to here all our computations were done sufficiently close to the
standard projector $\einheit$. As the Grassmannian is a homogeneous
    space, meaning that every
      point $P \in \Gr_{m,n}$ can be transformed to any other point on
      $Q \in \Gr_{m,n}$ by a suitable unitary matrix transformation $P= \Theta Q \Theta^\hop$, $\Theta \in U_n$, we can now
    transfer all our computations to an arbitrary element of
    $\Gr_{m,n}$. Let $Q\in\Gr_{m,n}$ be arbitrary, and let $P$ be sufficiently
close to
$Q$. Analogous to \eqref{eq:2}, we can then express 
$P=\e^{[\xi(P),Q]}Q\e^{-[\xi(P),Q]}$ for
unique $\xi\in T_Q\Gr_{m,n}$. Here, the tangent vector $\xi$ plays the role which $\left[
  \begin{smallmatrix}
    0&Z^\hop\\Z&0
  \end{smallmatrix}
\right]$ played in \eqref{eq:6}. By a slight abuse of notation we consider
the distance function between these \emph{arbitrary} $P$ and $Q$
\begin{equation}
  \label{eq:17}
   f\colon\Gr_{m,n}\to\R,\quad
P\mapsto \tr\xi\xi^\hop=\tr\xi^2=\dist^2(P,Q).
\end{equation}
Note, that by the above described invariance, the analogue of Eq. \eqref{eq:16} yields the critical point condition
\begin{equation}
  \label{eq:kritischerpunkt}
  \DD f(P)=0\Longleftrightarrow \xi=0.
\end{equation}

\begin{theorem}\label{thm:grad_dist}
The Riemannian gradient, with respect to
the Euclidean metric, of the function $f$, defined by (\ref{eq:17}), is given by
\begin{equation}
  \label{eq:22x}
  \operatorname{grad} f(P)= 2 \ad_{[\xi,Q]}P.
\end{equation}
\end{theorem}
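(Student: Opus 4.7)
The plan is to reduce the statement to the special case $Q=\einheit$ that has just been worked out in \eqref{eq:4}--\eqref{eq:6}, by exploiting the fact that $\Gr_{m,n}$ is a Riemannian homogeneous space under unitary conjugation. Concretely, choose $\Theta\in\U_n$ with $Q=\Theta\einheit\Theta^\hop$ and consider the map $\phi_\Theta\colon\Gr_{m,n}\to\Gr_{m,n}$, $P\mapsto\Theta^\hop P\Theta$. Its differential sends $H\in T_P\Gr_{m,n}$ to $\Theta^\hop H\Theta\in T_{\phi_\Theta(P)}\Gr_{m,n}$, and because the Frobenius inner product $\tr(XY)$ is invariant under simultaneous unitary conjugation, $\phi_\Theta$ is a Riemannian isometry. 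In particular it preserves geodesic distance, so $f(P)=\dist^2(P,Q)=\dist^2(\phi_\Theta(P),\einheit)=:\tilde f(\phi_\Theta(P))$.

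Next I would verify that the exponential parametrization is compatible with $\phi_\Theta$. Since conjugation by $\Theta^\hop$ commutes with the matrix exponential and with taking commutators, $P=\e^{[\xi,Q]}Q\e^{-[\xi,Q]}$ is carried to $\phi_\Theta(P)=\e^{[\tilde\xi,\einheit]}\einheit\e^{-[\tilde\xi,\einheit]}$ with $\tilde\xi:=\Theta^\hop\xi\Theta\in T_\einheit\Gr_{m,n}$. The vector $\tilde\xi$ now plays the role of the tangent vector at the standard projector, whereas the skew matrix $\mathcal{Z}$ of the earlier derivation corresponds to $[\tilde\xi,\einheit]$; a short block computation, using Lemma~\ref{lem:adP3} in the form $\ad_{\einheit}^2\mathcal{Z}=\mathcal{Z}$ for $\mathcal{Z}$ of the form \eqref{eq:27}, gives $[[\mathcal{Z},\einheit],\einheit]=\mathcal{Z}$. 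Rewriting the already-established formula \eqref{eq:6} in these variables therefore yields
$$\grad\tilde f(\phi_\Theta(P))=2\,\ad_{[\tilde\xi,\einheit]}\phi_\Theta(P).$$

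It remains to transport this expression back via $\phi_\Theta$. For any isometry the chain rule combined with metric invariance gives $\grad f(P)=(\mathrm{D}\phi_\Theta|_P)^{-1}\!\bigl(\grad\tilde f(\phi_\Theta(P))\bigr)$, which in our matrix setting amounts to the conjugation $\grad f(P)=\Theta\,\grad\tilde f(\phi_\Theta(P))\,\Theta^\hop$. Conjugation by $\Theta$ takes commutators to commutators and sends $[\tilde\xi,\einheit]$ to $[\xi,Q]$ and $\phi_\Theta(P)$ back to $P$, so the desired identity $\grad f(P)=2\,\ad_{[\xi,Q]}P$ drops out. The main obstacle is bookkeeping rather than mathematics: one has to keep straight that $\xi\in T_Q\Gr_{m,n}$ corresponds to the tangent vector $[\mathcal{Z},\einheit]$ at $\einheit$ and \emph{not} to $\mathcal{Z}$ itself, which is precisely why the final formula carries the extra $\ad_{[\xi,Q]}$ compared with the intermediate expression $\grad f(P)=2[\mathcal{Z},P]$ used in the $Q=\einheit$ case. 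A fully coordinate-based alternative would be to repeat verbatim the derivation \eqref{eq:4}--\eqref{eq:15} with $\einheit$ replaced by $Q$ and $\mathcal{Z}$ replaced by $[\xi,Q]$; this is mechanical but longer, and the homogeneity argument avoids it.
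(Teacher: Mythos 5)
Your proposal is correct and follows essentially the same route as the paper: the paper likewise derives the gradient at $Q=\einheit$ in \eqref{eq:4}--\eqref{eq:6} and then transfers it to arbitrary $Q$ by the homogeneity of $\Gr_{m,n}$ under unitary conjugation, with exactly your key bookkeeping point that $\xi$ corresponds to the tangent vector $[\mathcal{Z},\einheit]$ at $\einheit$ rather than to $\mathcal{Z}$ itself. You merely spell out the isometry and chain-rule details that the paper leaves as a remark.
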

Note that result \eqref{eq:22x} is in accordance with Proposition III 4.8
  in \cite{Sakai:1996lp}.
\begin{remark}
One can derive further explicit formulas for the distance, however,
they are less well suited
for gradient computations or numerics. Let $P,Q\in\Gr_{m,n}$. For any given $\Theta\in\U_{n}$ such that 
$  P=\Theta^{\hop}\einheit\Theta$ we define
\begin{equation}
    \left[\begin{smallmatrix}Q_{1} & Q_{2} \\ Q_{2}^{\hop} & Q_{3}\end{smallmatrix}\right]:=
  \Theta Q\Theta^{\hop}.\label{eq:29}
\end{equation}
Let $1\geq\lambda_{1}\geq\cdots\geq\lambda_{m}\geq 0$ denote the
 eigenvalues of $Q_{1}\in\Herm_m$. Then
\begin{equation}
  \label{eq:76}
  \dist(P,Q)=\textstyle{\sqrt{2\sum_{i=1}^{m}\arccos^{2}(\sqrt{\lambda_{i}})}}\ .
\end{equation}
Alternatively, let $1\!\geq\!\mu_{1}\!\geq\!\cdots\!\geq\!\mu_{n-m}\geq 0$ denote
the eigenvalues of $Q_{3}$. Then
\begin{equation}
  \label{eq:76aa}
  \dist(P,Q)=\textstyle{\sqrt{2\sum_{i=1}^{n-m}\arcsin^{2}(\sqrt{\mu_{i}})}}\ .
\end{equation}
In particular, if $P,Q\in\Gr_{m,n}$ with $Q=YY^{\hop}$ and $Y^{\hop}Y=I_m $, then
\begin{equation}
  \label{eq:76a}
  \half\dist^2(P,Q)=\tr
    \big(\arccos^{2}((Y^{\hop}PY)^{\frac{1}{2}})
    \big)\ .
\end{equation}
\end{remark}

\subsection{The Karcher mean}
We now consider a geodesically convex open ball\footnote{I.e. all points in $\mathcal{B}$ can be connected by a unique shortest geodesic contained completely
in $\mathcal{B}$. E.g. for a sphere, the maximal geodesically convex open balls are open hemispheres.}
$\mathcal{B}\subset\Gr_{m,n}$ containing all, say $N$ data points
$Q_i$. Note, that the Riemannian exponential map is bijective on $\mathcal{B}$ and 
thus the results from the last section carry over to the subsequent analysis. 
Moreover, this assumption ensures that the Karcher
  mean is the unique minimizer of the function defined by
  \eqref{eq:20}, \cite{karcher:77}. 
  This seems to be a sensible assumption in many applications,
where different data might be considered to be different measurements
of one and the same observable. Let us assume that $P\!\in\!\mathcal{B}$ and thus, for each $i$ there exists a unique $\xi_i\!\in\!
T_{Q_i}\!\Gr_{m,n}$ with
\begin{equation}
P=\exp_{Q_i}(\xi_i)=\e^{[\xi_i,Q_i]}Q_i\e^{-[\xi_i,Q_i]}.\label{eq:23}
\end{equation}
Let the \emph{Karcher mean} function now be defined as
\begin{equation}
  \label{eq:20}
    F\colon\mathcal{B}\to\R,\quad
P\mapsto\tfrac{1}{N}\textstyle\sum_{i=1}^N\dist^2(P,Q_i).
\end{equation}
Adapting \eqref{eq:31xx} and \eqref{eq:27} accordingly, we get
\begin{equation}
\label{eq:20b}
    F(P)=\tfrac{1}{N}\textstyle\sum_{i=1}^N\tr\xi_i^2=-\tfrac{1}{N}\textstyle\sum_{i=1}^N\tr Q_i\ad^2_{[\xi_i,Q_i]}Q_i.
\end{equation}
As a generalization of \eqref{eq:kritischerpunkt} we get the
well known fact \cite{karcher:77,CorcueraKendall:1999aa} that
\begin{equation}
  \label{eq:21}
  \DD F(P)=0\Longleftrightarrow \textstyle\sum_{i=1}^N\e^{[\xi_i,Q_i]}\xi_i\e^{-[\xi_i,Q_i]}=0.
\end{equation}
The interpretation of this condition is as follows. Let $P$ be
the unique critical point of $F$ on $\mathcal{B}$. Attaching a suitable
coordinate chart around $P$ tells us that in this chart
$P$ is equal to the usual Euclidean geometric mean of the data
points $Q_i$, expressed in exactly this chart.\footnote{Such a chart is called Riemannian
  normal coordinate chart in the literature.}
The Riemannian gradient of the Karcher mean now follows immediately from Theorem \ref{thm:grad_dist}.
\begin{theorem}
\label{thm:gradient}
The Riemannian gradient of $F$, defined by (\ref{eq:20}), is as
\begin{equation}
  \label{eq:22}
  \operatorname{grad} F(P)= \tfrac{2}{N}\textstyle\sum_{i=1}^N\ad_{[\xi_i,Q_i]}P.
\end{equation}
\end{theorem}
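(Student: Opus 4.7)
The plan is to exploit the linearity of the Riemannian gradient together with the single-point result already established in Theorem \ref{thm:grad_dist}. Concretely, writing $F(P) = \tfrac{1}{N}\sum_{i=1}^N f_i(P)$ with $f_i(P) := \dist^2(P, Q_i)$, the directional derivative satisfies $\DD F(P) H = \tfrac{1}{N}\sum_{i=1}^N \DD f_i(P) H$ for every $H \in T_P \Gr_{m,n}$. Since the Riemannian gradient is defined via the Riesz representation with respect to the (bilinear) Euclidean metric $g_P$, linearity transfers directly: $\operatorname{grad} F(P) = \tfrac{1}{N}\sum_{i=1}^N \operatorname{grad} f_i(P)$.

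Next I would apply Theorem \ref{thm:grad_dist} to each summand. For each fixed $i$, the hypothesis that $P \in \mathcal{B}$ ensures by geodesic convexity that there exists a unique $\xi_i \in T_{Q_i}\Gr_{m,n}$ with $P = \e^{[\xi_i, Q_i]} Q_i \e^{-[\xi_i, Q_i]}$, so that $\xi_i$ plays exactly the role of $\xi$ in \eqref{eq:17}. Theorem \ref{thm:grad_dist} then yields $\operatorname{grad} f_i(P) = 2\ad_{[\xi_i, Q_i]} P$, which belongs to $T_P \Gr_{m,n}$ by construction.

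Summing these identities and multiplying by $1/N$ gives
\begin{equation}
\operatorname{grad} F(P) = \tfrac{2}{N}\sum_{i=1}^N \ad_{[\xi_i, Q_i]} P,
\end{equation}
as claimed. The tangency of the right-hand side is automatic since it is a sum of tangent vectors at $P$.

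There is essentially no obstacle: the only subtlety is to justify that the uniqueness of each $\xi_i$ and the applicability of Theorem \ref{thm:grad_dist} are guaranteed by the geodesic convexity of $\mathcal{B}$, which is precisely the standing assumption. All other content is routine linearity and an appeal to the already-proved single-distance gradient formula.
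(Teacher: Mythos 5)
Your proposal is correct and matches the paper's own argument: the authors state that Theorem~\ref{thm:gradient} ``follows immediately from Theorem~\ref{thm:grad_dist},'' which is exactly your decomposition of $F$ into the average of the $f_i(P)=\dist^2(P,Q_i)$, linearity of the gradient via the Riesz representation, and the single-distance gradient formula applied termwise. Your explicit attention to the uniqueness of each $\xi_i$ on the geodesically convex ball $\mathcal{B}$ is a welcome (if routine) addition that the paper leaves implicit.
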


\subsection{Inverse of the Riemannian exponential}
\label{sec:53}
In the sequel we will present a procedure to explicitly compute 
the inverse of the Riemannian exponential.

To that end, let $Q \in \Gr_{m,n}$ and $B \subset T_Q \Gr_{m,n}$ be a neighborhood of $0$ such that
$\exp_Q \colon B \to \exp_Q(B)$ is a bijection. Assume further that $P \in
\exp_Q(B)$. Thus, there exists a unique $\xi \in B$ such that
\begin{equation}
  \label{eq:24}
  P=\e^{[\xi,Q]}Q\e^{-[\xi,Q]}.
\end{equation}
From the previous section, it follows that
$\tr\xi^2=\dist^2(P,Q)$. 
A partial task in our optimization
procedure will therefore be to compute $\xi$ as a
function of $P$ for a given $Q$ in \eqref{eq:24}. Let $\Theta\in\U_n$ such that $Q=\Theta \einheit\Theta^\hop$. It follows that 
\begin{equation}
  \label{eq:25}
  \Theta^\hop P \Theta=\e^{[\widehat{\xi},\einheit ]} \cdot\einheit \cdot\e^{-[\widehat{\xi},\einheit ]}=:\widehat{P}=\left[
  \begin{smallmatrix}
    \widehat{P}_{11}&\widehat{P}_{12}\\\widehat{P}_{12}^\hop&\widehat{P}_{22}
  \end{smallmatrix}
\right],
\end{equation}
with $\widehat{\xi}$ of the form
\begin{equation}
  \label{eq:26}
  \widehat{\xi}=\Theta^\hop \xi \Theta=\left[
  \begin{smallmatrix}
    0&-Z^\hop\\Z&0
  \end{smallmatrix}
\right].
\end{equation}
Using a singular value decomposition (SVD) as $
  Z^\hop\!=\!U\Sigma^\top\!V^\hop\!$
we arrive at the representation
\begin{equation}
  \begin{split}
    \label{eq:28}
    \left[
      \begin{smallmatrix}
        U^\hop&0\\0&V^\hop
      \end{smallmatrix}
    \right]\!\widehat{P}\!\left[
      \begin{smallmatrix}
        U&0\\0&V
      \end{smallmatrix}
    \right]\!&=\!\e^{\left[
      \begin{smallmatrix}
        0&-\Sigma^\top\\\Sigma&0
      \end{smallmatrix}
    \right]} \cdot \einheit \cdot \e^{-\left[
      \begin{smallmatrix}
        0&-\Sigma^\top\\\Sigma&0
      \end{smallmatrix}
    \right]}\\
&=\!\!\left[
      \begin{smallmatrix}
        \cos\sqrt{\Sigma^\top\Sigma}&
-\Sigma^\top\tfrac{\sin\sqrt{\Sigma\Sigma^\top}}{\sqrt{\Sigma\Sigma^\top}}\\
\tfrac{\sin\sqrt{\Sigma\Sigma^\top}}{\sqrt{\Sigma\Sigma^\top}}\Sigma&\cos\sqrt{\Sigma\Sigma^\top}
      \end{smallmatrix}
    \right]\!\!\cdot \! \einheit\!\cdot \!\!
\left[
      \begin{smallmatrix}
        \cos\sqrt{\Sigma^\top\Sigma}&
\Sigma^\top\tfrac{\sin\sqrt{\Sigma\Sigma^\top}}{\sqrt{\Sigma\Sigma^\top}}\\
-\tfrac{\sin\sqrt{\Sigma\Sigma^\top}}{\sqrt{\Sigma\Sigma^\top}}\Sigma&\cos\sqrt{\Sigma\Sigma^\top}
      \end{smallmatrix}
    \right]\\
&=\!\!\left[
      \begin{smallmatrix}
        \cos^2\sqrt{\Sigma^\top\Sigma}&
-\Sigma^\top\tfrac{\cos\sqrt{\Sigma\Sigma^\top}\sin\sqrt{\Sigma\Sigma^\top}}{\sqrt{\Sigma\Sigma^\top}}\\
\tfrac{\cos\sqrt{\Sigma\Sigma^\top}\sin\sqrt{\Sigma\Sigma^\top}}{\sqrt{\Sigma\Sigma^\top}}\Sigma&\sin^2\sqrt{\Sigma\Sigma^\top}
      \end{smallmatrix}
    \right]\\
&=\!\!\left[
      \begin{smallmatrix}
        \cos^2\sqrt{\Sigma^\top\Sigma}&
-\Sigma^\top\tfrac{\sin 2\sqrt{\Sigma\Sigma^\top}}{2\sqrt{\Sigma\Sigma^\top}}\\
\tfrac{\sin 2\sqrt{\Sigma\Sigma^\top}}{2\sqrt{\Sigma\Sigma^\top}}\Sigma&\sin^2\sqrt{\Sigma\Sigma^\top}
      \end{smallmatrix}
    \right].
  \end{split}
\end{equation}
The above matrix valued trigonometric functions have to be
interpreted via their series expansion. To be more precise, the
matrix $\Sigma$ is a rectangular diagonal matrix, and therefore
$\Sigma\Sigma^\top$, $\Sigma^\top\Sigma$, and their corresponding square roots are
diagonal (but square) as well. For $i\leq m$ the $ii-$th entry of
$\cos^2\sqrt{\Sigma^\top\Sigma}$ equals the squared cosine of the
$i-$th singular value of $\Sigma$, i.e. it equals $\cos^2 \sigma_i$,  
and equals $1$ otherwise. The right-hand side of \eqref{eq:28} can be efficiently computed via a
CS-decomposition \cite{GolubVan-Loan:1996aa} of $Y$, where $\widehat{P}\!=\!YY^\hop$ with $Y^\hop Y\!=\!I_m$,
or equivalently by an SVD of $\widehat{P}_{12}$. Using inverse trigonometric functions, $\xi$
can now be constructed from \eqref{eq:26}. Ultimately, we have explicitly 
constructed the \emph{inverse} of the Riemannian exponential map on $\Gr_{m,n}$.

\section{A conjugate gradient algorithm for computing the Karcher mean on the Grassmannian}
\label{sec:06}
In the last sections, all ingredients have been derived for a geometric CG 
algorithm as described in Section \ref{sec:03}.
Here, we focus on its implementation and provide an explicit pseudo code 
for computing the Karcher mean of a set of complex subspaces.
Note that a 
projector $P$ can be uniquely expressed as $P=X X^\hop$, where $X$ is
an element of the complex Stiefel manifold
\begin{equation}
\St_{m,n} := \{ X \in \mathbb{C}^{n \times m} | 
X^{\hop} X = I_{m} \}.
\end{equation}
In order to compute the gradient of the Karcher mean function, the results in Section~\ref{sec:05} require that after computing the tangent
directions $\xi_{i} \in T_{Q_{i}}\!\Gr_{m,n}$ in \eqref{eq:23}, 
one needs to parallel transport them back to $T_{P}\Gr_{m,n}$, i.e.
$\e^{[\xi_i,Q_i]}\xi_{i}\e^{-[\xi_i,Q_i]} \in T_{P}\Gr_{m,n}$. 
The gradient of the Karcher mean function is simply the sum of all the parallel
transported vectors in $T_{P}\Gr_{m,n}$ according to
\eqref{eq:22}.
\begin{algorithm}
\caption{Riemannian gradient of the Karcher mean on $\Gr_{m,n}$}\label{algo:logGr}
	\SetAlgoNoLine
	\SetKwHangingKw{IN}{Input~~:}
	\SetKwHangingKw{SI}{Step~1~:}
	\SetKwHangingKw{SII}{Step~2~:}
	\SetKwHangingKw{OUT}{Output:}
	
	\IN{$Y_{i} \in \St_{m,n}$ for $i \!=\! 1,\ldots,N$ and $[X_{1}~X_{2}] \in \U_{n}$
		with $X_{1} \in \St_{m,n}$}
	
	\SI{ {\bf for} $i =  1,\ldots,N$ {\bf do}}
	
		{\hspace{25mm}{Compute the SVD of $X_{1}^{\hop} Y_{i} {Y_{i}}^{\hop} 
			X_{2} = U_{i}\Sigma_{i} {V_{i}}^{\hop}$}\;}
			
		{\hspace{25mm}Compute $\Lambda_{i} := U_{i}^{\hop} X_{1}^{\hop} Y_{i} 
			{Y_{i}}^{\hop} X_{1} U_{i}$}
			
	\SII{Compute $\mathcal{Z} = \sum\limits_{i=1}^{N}\big[
		\begin{smallmatrix}
			0&-Z_{i}^\hop\\Z_{i}&0
		\end{smallmatrix} \big]$ with $-Z_{i}^{\hop} = U_{i} \left[\arccos
			\sqrt{\Lambda_{i}}~~\pmb{0}\right] {V_{i}}^{\hop}
			 \in \mathbb{C}^{m \times (n-m)}$}
	
	\OUT{The Riemannian gradient $\operatorname{grad}F(X_{1}X_{1}^{\hop}) = -\big[X_{1}~X_{2}\big] \mathcal{Z} \big[X_{1}~X_{2}\big]^{\hop}$
		}
\end{algorithm}

Now let $N$ complex subspaces $\{Q_{i}\} \subset \Gr_{m,n}$ be given, and 
let $Y_{i1} \in \St_{m,n}$, for $i = 1, \ldots, N$, be the respective set of 
$N$ unitary bases.
For a given initialization $P \in \Gr_{m,n}$ with its representation $X \in \St_{m,n}$, 
we summarize a CG algorithm for computing the Karcher mean of the $Q_{i}$'s in 
Algorithm~\ref{algo:karcher}.

\begin{algorithm}
\caption{A CG for computing the Karcher mean on $\Gr_{m,n}$}\label{algo:karcher}
	\SetAlgoNoLine
	\SetKwHangingKw{IN}{Input~~:}
	\SetKwHangingKw{SI}{Step~1~:}
	\SetKwHangingKw{SII}{Step~2~:}
	\SetKwHangingKw{SIII}{Step~3~:}
	\SetKwHangingKw{SIV}{Step~4~:}
	\SetKwHangingKw{SV}{Step~5~:}
	\SetKwHangingKw{SVI}{Step~6~:}
	\SetKwHangingKw{SVII}{Step~7~:}
	\SetKwHangingKw{OUT}{Output:}

	\IN{Stiefel matrices $Y_{i} \in \St_{m,n}$ for $i=1,\ldots,N$}
			
	\SI{Generate an initial guess $[X_{1}^{(1)}~X_{2}^{(1)}] \in \U_{n}$ and 
			set $i=1$}
	
	\SII{Compute $H^{(1)} = -\grad F(X^{(1)} X^{(1)\hop})$ using 
		Algorithm~\ref{algo:logGr} }
	
	\SIII{Set $i=i+1$}
	
	\SIV{Update $[X_{1}^{(i+1)}~X_{2}^{(i+1)}] \gets \mathsf{e}^{a 
			[H_i,X_{1}^{(i)} {X_{1}^{(i)}}^\hop]} [X_{1}^{(i)}~X_{2}^{(i)}]$, where $
			a$ is computed via backtracking line search as in Algorithm~\ref{algo:backtracking}}
			
	\SV{Update $H^{(i+1)} \gets -G^{(i+1)} + r~G_{H_{i}}(a)$, where
		\vspace{-3mm}
			$$G^{(i+1)}=\grad F(X^{(i+1)} {X^{(i+1)}}^\hop),\vspace{-3mm}$$
			and $r$ is chosen according to Eq. (\ref{eq:different gammas})}
	\SVI{If $i\mod (2m(n-m)-1)=0$, set $H^{(i+1)} \gets -G^{(i+1)}$}

	\SVII{If $\left\| G^{(i+1)} \right\|$ is small enough, stop. 
		Otherwise, go to Step 3}
\end{algorithm}

As mentioned in Section~\ref{sec:03}, instead of employing a backtracking line search
for selecting an optimal step size at each conjugate direction in Step~4 in Algorithm~3, one can use a one
dimensional Newton step instead.
Applying this approach to a 
general Grassmannian requires the calculation of the first and second derivatives of 
eigenvalues and eigenvectors of a Hermitian matrix valued function 
$Y_{i}^{\hop} P(t) Y_{i}$, cf. \eqref{eq:76}.
Unfortunately, this approach is not well defined when the corresponding 
eigenvalues are multiple.

In the rest of this section, we derive a Newton step size selection as in 
\eqref{def:approx_stepsize_newton} for the special case where $m=1$, i.e. the 
complex projective space $\mathbb{CP}^{n-1} = \Gr_{1,n}$.
Let $Q_i=y_i y_i^\hop \in \mathbb{CP}^{n-1}$, $i=1,\ldots,N$ be a given set of data points 
with $y_{i} \in \mathbb{C}^{n}$ statisfying $\|y_{i}\| = 1$.
We define 
\begin{equation}
	\lambda_i(t):=y_i^\hop P(t)y_i,
\end{equation}
where $P(t)$ is defined by \eqref{eq:geodesic2},
and by abuse of notation we set $\lambda_i=\lambda_i(0)$. The first
and second derivatives of the Karcher mean $F$ can be computed as 
\begin{equation}
	\tfrac{\operatorname{d}}{\operatorname{d}\varepsilon}(F\circ P)
	(\varepsilon)|_{\varepsilon=0}
	= \operatorname{tr}(\operatorname{grad}F(P(0)) \dot{P}(0)),
\end{equation}
and
\begin{equation}
\begin{split}
	 \tfrac{\operatorname{d}^2}{\operatorname{d}\varepsilon^2}(F\circ P)
	(\varepsilon)|_{\varepsilon=0}
	&= \tfrac{\operatorname{d}^2}
	{\operatorname{d}\varepsilon^2}2\textstyle\sum_{i=1}^n\arccos^2\left.
	\sqrt{\lambda_i(\varepsilon)}\right|_{\varepsilon=0} \\
	&=2\tfrac{\operatorname{d}}{\operatorname{d}\varepsilon}
	\textstyle\sum_{i=1}^n\arccos\sqrt{\lambda_i(\varepsilon)}
	\tfrac{-\dot\lambda_i(\varepsilon)}{\sqrt{\lambda_i(\varepsilon)-\lambda_i
	(\varepsilon)^2}}\big|_{\varepsilon=0}\\
	&=2\textstyle\sum_{i=1}^n\left(\tfrac{\dot\lambda_i^2}{\lambda_i-\lambda_i^2}-
	\tfrac{{2(\lambda_i-\lambda_i^2)}\ddot\lambda_i-
	{\dot\lambda_i^2(1-2\lambda_i)}}
	{2(\sqrt{\lambda_i-\lambda_i^2})^{3}}\arccos\sqrt{\lambda_i}\right),
\end{split}
\end{equation}
respectively.
Finally, a one dimensional Newton step can be computed according to 
Eq.~\eqref{def:approx_stepsize_newton}.

\section{An application in blind identification}
\label{sec:07}

We outline how the above described computation of the Karcher mean 
can be applied to the problem of \emph{Blind Identification (BI)}.
A simple instantaneous BI model assumes that the 
observation is a linear combination of some unknown sources, i.e.
\begin{equation}
\label{eq:bss}
	w(t) = A s(t),
\end{equation}
where $A \in \mathbb{C}^{n \times n}$ is the full rank system matrix and $w(t)$ presents $n$ observed linear mixtures of $n$ sources $s(t)$.
Blind identification aims to estimate the system matrix $A$ and 
various algorithms have been developed for this task, cf. \cite{Cichocki2002,como:book10}.
Recall, that the 
system matrix $A$ can be estimated only up to an arbitrary complex scaling
and permutation of the columns, cf. \cite{como:book10}.
Thus it is reasonable to consider estimates of columns of $A$ as elements in the
complex projective space $\mathbb{CP}^{n-1}:=\Gr_{1,n}$. We refer to
\cite{shen:ica10} for an approach on how to include the full rank constraint of $A$ into this setting.

It is known that performance of BI methods are sensitive
to the distribution of noise, cf. \cite{card:ieef93,waxm:ieeespl97}.
In other words, different distributions for the noise might lead to different 
optimal estimation of the system matrix. 
In particular, when system noise is present that varies over time, the matrix 
$A$ can no longer be guaranteed to be estimated correctly by a single process.
To overcome this difficulty, an intuitive idea is to simply average over
sub-optimal estimations of the system.
A similar approach has been investigated in \cite{mant:dsp06}, where a Kar\-cher mean 
based method is proposed for solving the real and whitened ICA problem.
Unfortunately, its applications are limited to the cases with stationary 
signals and noise.

Our experiments employ the following noisy model
\begin{equation}
	w_{i}(t) = (A + \epsilon_{i}Z_{i}) s_{i}(t), \qquad i=1,\dots, N,
\end{equation}
where $A$ is the ground truth system matrix, $Z_{i} \in \mathbb{C}^{n \times n}$
models system noise, $\epsilon_{i} > 0$ represents the noise level, 
and $s_{i}(t)$ denotes the unknown signals.
Both, real and imaginary parts of all entries of the ground truth system matrix $A$
are drawn from a normal distribution.
Perturbations $Z_{i}$ are applied to the system matrix, where the real and imaginary part of each entry of $Z_i$ are drawn from a uniform distribution on the interval $[-0.5, 0.5]$.

A popular BI algorithm is the so-called Strong Uncorrelated Transform (SUT),
cf. \cite{erik:mlsp04}. 
It uses the assumption that the source signals are uncorrelated and non-circular with 
distinct circularity coefficients. A joint diagonalizer of both the covariance and the pseudo-covariance matrix of the observations serves as an estimation of the inverse
of the system matrix $A$.
We employ the SUT for each of the $N$ subproblems to 
get $N$ estimates of $A$. 
After applying a suitable preprocess, we may assume that corresponding columns of 
the estimates are aligned, cf. \cite{maki:book07} for more details, and
thus we neglect the permutation ambiguity in our experiments.
The overall estimate is  
given by column wise computing the Karcher mean of the solutions of the sub-problems.
Identification performance of the proposed method
is measured by the normalized Amari error, cf. 
\cite{amar:nips96}, defined as
\begin{equation}
 J(\widehat{A},A) := \tfrac{1}{m} \left( \textstyle\sum\limits_{i=1}^{m} \tfrac
 {\sum\limits_{j=1}^{m} |b_{ij}|}
 {\max\limits_{j} |b_{ij}|}+ \textstyle\sum\limits_{j=1}^{m} \tfrac
 {\sum\limits_{i=1}^{m} |b_{ij}|}
 {\max\limits_{i} |b_{ij}|}\right) - 2,
\end{equation}
where $\widehat{A}$ is an estimation of $A$, and $B =(b_{ij})_{i,j=1}^{m}= 
\widehat{A}^{-1} A$.
In general, the smaller the Amari error, the better the identification of the system matrix.

In our experiments, we compare the proposed Karcher mean method to
a simple standard approach, referred to as the Euclidean mean approach.
Thereby, all solutions produced by SUT are summed up and the columns of the
obtained matrix are normalized to have unit norm.
First of all, we investigate the performance of both methods against the number of
estimations $N$.
We fix $n=5$ and run $N = 100$ experiments per number of estimations.
As the box plots of Amari errors in Figure~\ref{fig:03} and Figure~\ref{fig:04} suggest,
both Karcher and Euclidean subspace averaging methods admit a consistently 
increasing performance with an increasing number of estimations. 
In our second experiment, we choose a various number of noise levels 
$\epsilon_{i} \in \{ 1, 0.5, 0.2, 0.1, 0.01\}$ and fix the number of estimations
$N = 10$. 
As shown in Figure~\ref{fig:05}, the Karcher mean approach outperforms the Euclidean counterpart consistently and its advantage is considerably higher, the more noise is present.
 \begin{figure}[t!]
 \centering
 \includegraphics[width=0.8\columnwidth]{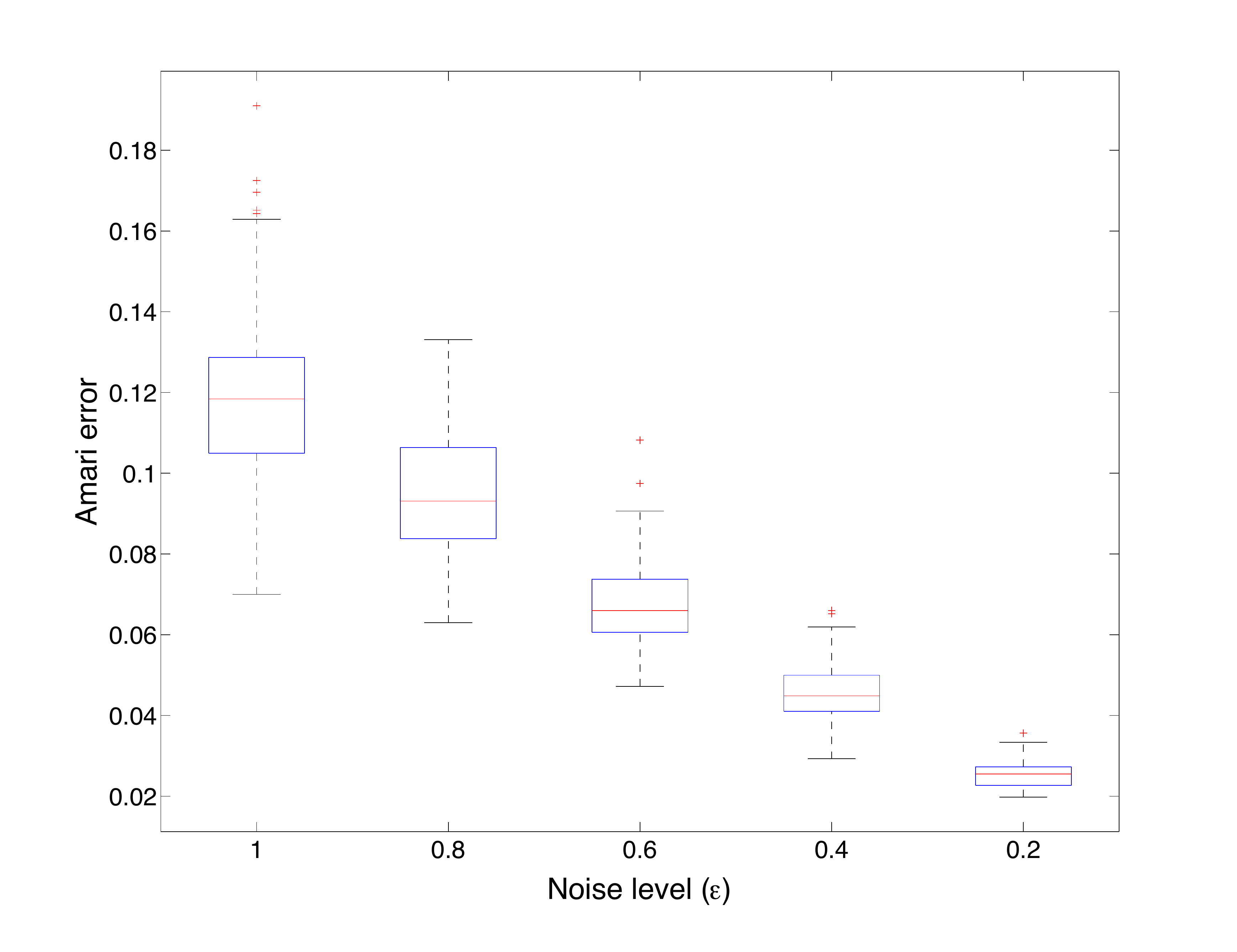}
 \vspace*{-4mm}
 \caption{Performance of subspace averaging via Karcher mean.}
 \label{fig:03}
 \end{figure} 
 \begin{figure}[t!]
 \centering
 \includegraphics[width=0.8\columnwidth]{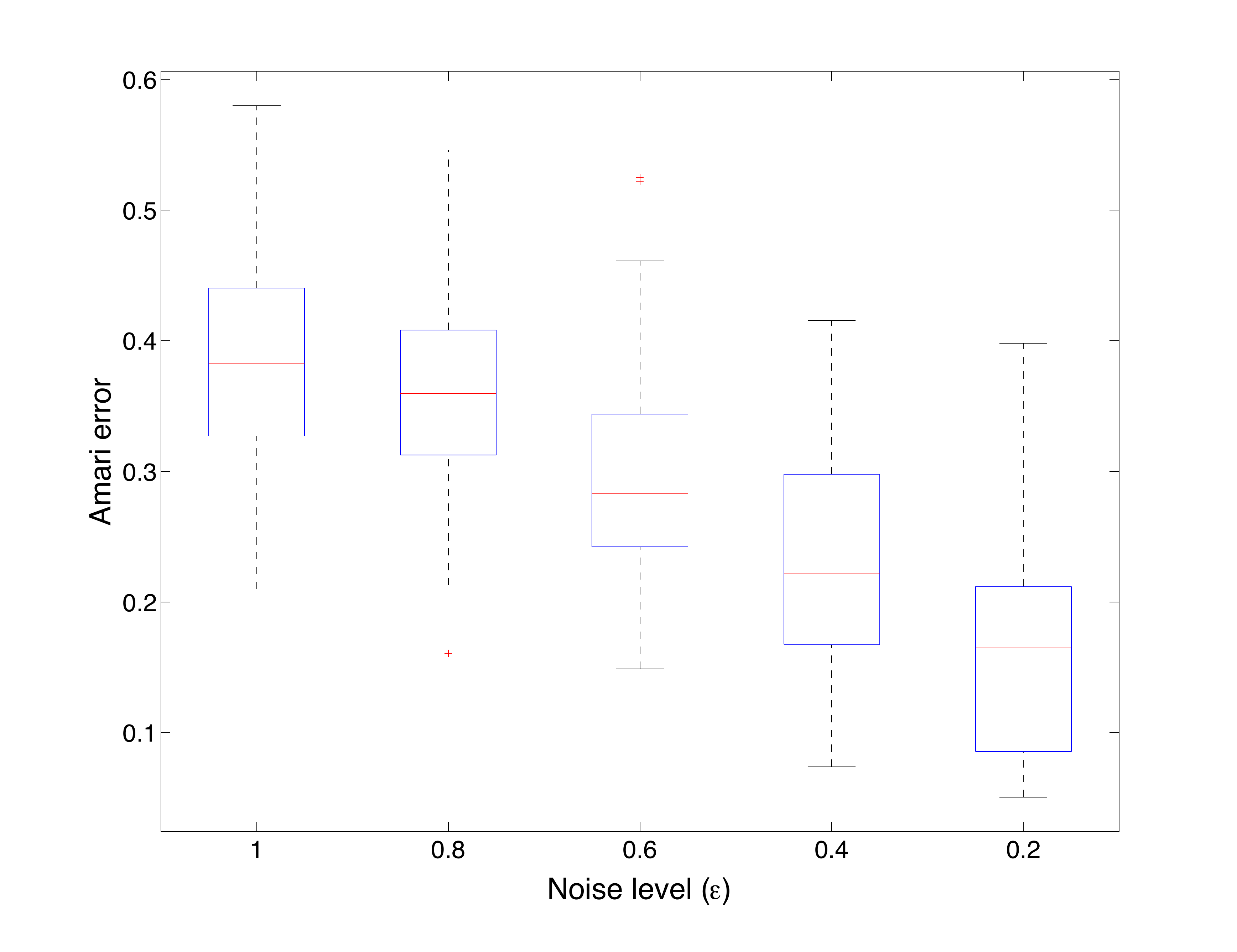}
 \vspace*{-4mm}
 \caption{Performance of subspace averaging via Euclidean mean.}
 \label{fig:04}
 \end{figure} 
 \begin{figure}[t!]
 \centering
 \includegraphics[width=0.8\columnwidth]{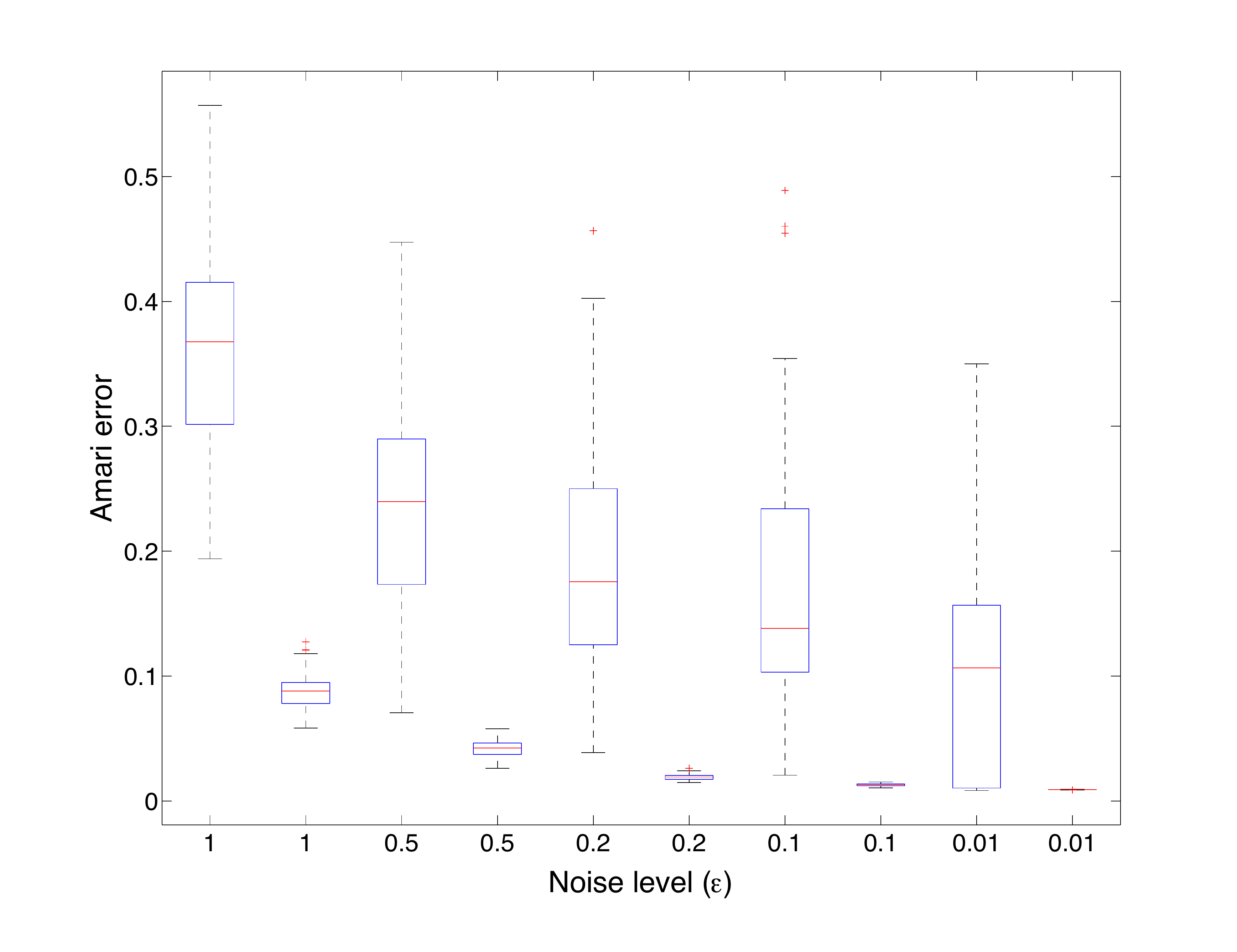}
 \vspace*{-4mm}
 \caption{Comparison of performance (left = Euclidean, right = Karcher).}
 \label{fig:05}
 \end{figure} 

\section{Conclusion}
\label{sec:08}
This work focuses on the problem of averaging complex subspaces of
equal dimension by computing their Karcher mean, which is under mild assumptions the unique
minimum of a well defined
smooth function on the complex Grassmannian. 
An accessible introduction to the geometric structure of the Grassmanian is
provided by its identification with the set of Hermitian
projectors. In particular, explicit formulas for geodesics, parallel transport, and the Riemannian gradient 
of the Karcher mean function are given, which, in contrast to other formulas available in the literature, are well suited for implementation.

These results are used to propose an intrinsic conjugate gradient algorithm on the Grassmannian for computing the Karcher mean. We present experiments and outline the usability of such an approach in blind identification.

\end{document}